\newcommand{\ba}{\begin{eqnarray}}
\newcommand{\ea}{\end{eqnarray}}
\newtheorem{thm}{Theorem}[section]
\newtheorem{problem}{Problem}
\newtheorem{conjecture}{Conjecture}
\newtheorem{theorem}[thm]{Theorem}
\newtheorem{lemma}[thm]{Lemma}
 \newtheorem{assertion}[thm]{Assertion}
\newtheorem{proposition}[thm]{Proposition}
\newtheorem{corollary}[thm]{Corollary}
\newcommand*{\rom}[1]{\expandafter\@slowromancap\romannumeral #1@}
\newcommand\blfootnote[1]{%
  \begingroup
  \renewcommand\thefootnote{}\footnote{#1}%
  \addtocounter{footnote}{-1}%
  \endgroup
}
\date{}
\begin{document}
\title{Subdivisions of Six-Blocks Cycles $C(k,1,1,1,1,1)$ in  Strong Digraphs}
 	\maketitle
 	\begin{center}\author{Hiba Ayoub$^{1}$, Soukaina Zayat}\footnote[1]{KALMA Laboratory, Department of Mathematics, Faculty of Sciences I, Lebanese University, Beirut - Lebanon}$^{,}$\footnote[2]{Arts, Sciences, and Technology University in Lebanon, CRAMS: Center for Research in Applied Mathematics and Statistics, Faculty of Sciences, Beirut, Lebanon.}, \author{Darine Al-Mniny}\footnote[4]{KALMA Laboratory, Department of Mathematics, Lebanese University, Baalbeck - Lebanon.}$^{,}$\footnote[5]{Department of Mathematics and Physics,  Lebanese International University LIU, Rayak, Lebanon.}$^{,}$\footnote[6]{ Department of Mathematics and Physics,  The International University of Beirut BIU, Beirut, Lebanon.}$^{,}$\footnote[7]{ Department of Computer Science and  Mathematics,  Lebanese American University, Beirut, Lebanon.} \end{center} 
 \begin{abstract}
 
\blfootnote{\textbf{Corresponding Author:}\\
	Soukaina Zayat, KALMA Laboratory, Department of Mathematics, Faculty of Sciences I, Lebanese University, Beirut - Lebanon\\
Email: soukaina.zayat.96@outlook.com} 
 
 	\noindent A cycle $C(k_{1}, k_{2}, \dots,k_{n})$ is the oriented cycle formed of $n$ blocks of lengths $k_{1}, k_{2}, \dots,k_{n-1}$ and $k_{n}$ respectively. In 2018 Cohen et al.\ conjectured that for every positive integers $k_1, k_2,\dots,k_n$, there exists a constant $g(k_1,k_2,\dots,k_n)$ such that every strongly connected digraph  containing no subdivisions of $C(k_1, k_2, \dots, k_n)$ has a  chromatic number at most $g(k_1,k_2,\dots,k_n)$. 
 	In their paper, Cohen et al.\ confirmed the conjecture for cycles with two blocks and for cycles with four blocks having all its blocks of length $1$. Recently, the conjecture was proved for special types of four-blocks cycles. 
 	In this paper, we confirm Cohen et al.\ 's conjecture for all six-blocks cycles $C(k,1,1,1,1,1)$. Precisely, for any integer $k$, we prove that every strongly connected digraph  containing no subdivisions of $C(k,1,1,1,1,1)$ has a  chromatic number at most $O(k)$, and we significantly reduce the chromatic number in case $k=1$. 
 	\end{abstract}
 	\textbf{Keywords.} Chromatic number, strong digraph, subdivision,  six-blocks cycle.
 	\section{Introduction}
 	\pagenumbering{arabic}
 	A graph $G$  is said to be simple if it contains no loops and no multiple edges. All graphs in this paper are considered simple. The chromatic number of a graph $G$, denoted by $ \chi(G)$, is the minimal number of colors needed to color the vertices of $G$, such that any two adjacent vertices have distinct colors. We call $D$  a digraph, if it is obtained from a graph $G$ by giving an orientation to each edge. In this case, $G$, denoted by $G(D)$, is called the underlying graph of  $D$. Denote by $V(D)$ the set of vertices of $D$, and by $A(D)$ the set of arcs of $D$. The chromatic number of a digraph $D$ is the chromatic number of it's underlying graph.  If $\chi(D)=k$, then we say that $D$ is $k$-chromatic .   \medbreak

\noindent  The length of a path  (resp. cycle) $P$, denoted by $l(P)$ is the number of its edges.  A path $P=x_1x_2 \dots x_t$ is said to be directed, if $(x_i,x_{i+1}) \in A(P)$, for all $i \in \lbrace 1, \dots t-1 \rbrace$.  A digraph $D$ is said to be strongly connected, if for any two vertices $x$ and $y$ there exists a directed path from $x$ to $y$. A block of a path $P$ (resp. cycle) is a maximal directed subpath  of $P$. Denote by $P(k_{1}, k_{2}, \dots,k_{t})$ (resp. $C(k_{1}, k_{2}, \dots,k_{t})$) the oriented path (resp. cycle) formed of $n$ blocks of lengths $k_{1}, k_{2}, \dots,k_{t-1}$ and $k_{t}$ respectively. \medbreak 

\noindent A subdivision of $D$, denoted by $S$-$D$,  is a digraph obtained from $D$ by replacing each arc $(x,y)$ of $D$  by a directed path from $x$ to $y$ of length at least $1$, where all new paths are internally disjoint. A digrah is said to be $D$-subdivision-free if it contains no subdivision of $D$ as a subdigraph.\\

The following problem is proposed by Cohen et al.\ \cite{hjk} in 2018:
 \begin{problem}\label{problem2} What are the digraphs $D$ that can be found in any $k$-chromatic digraph as subdigraphs?\end{problem}

\noindent In \cite{lh} Erd\"{o}s  proved the existence of digraphs with  arbitrarily high girth and arbitrarily large chromatic number. Which means the existence of  digraphs with arbitrarily high chromatic number containing no subdigraph isomorphic to a digraph $D$ containing an oriented cycle. So, the only possible way to answer Problem \ref{problem2}  is the oriented trees.  Burr \cite{kg} conjectured that every $(2k-2)$-chromatic digraph contains every oriented tree $T$ of order $k$, and proved that every $(k-1)^2$-chromatic digraph contains a copy of any oriented tree $T$ of order $k$. The best known  bound, obtained by Addario-Berry et al.\ \cite{jhp}, is  $(k/2)^2$. For special oriented trees, better bounds on the chromatic number are known. The most celebrated one, known as Gallai-Hasse-Roy-Vitaver theorem \cite{Gallai, Roy} states that every $n$-chromatic digraph contains a directed path of length $n-1$. El Sahili \cite{sahili1} conjectured that every $n$-chromatic digraph contains every oriented path of order $n$. Addario et al.\ \cite{khj} and El Sahili et al.\ \cite{sara}, proved independently that this conjecture holds for two blocks paths with $n \geq 4$. More results concerning paths can be found in \cite{batoul1, batoul2, maydoun, jobbe}.
  Bondy \cite{fdo} proved that  every strong digraph $D$ contains a directed cycle of length at least $\chi(D)$. The strong connectivity condition is necessary, because of the existence of  acyclic digraphs (transitive tournaments) with no directed cycle and large chromatic number. As any directed cycle of length at least $k$ is a subdivision of the directed cycle $C_{k}$   of length $k$,   Cohen et al.\  conjectured extension of Bondy's theorem to all oriented cycles, as follows:
  \begin{conjecture}(Cohen et al.\ \cite{hjk})\label{conj} For every positive integers $k_1, k_2,\dots,k_n$, there exists a constant $g(k_1,k_2,\dots,$ $k_n)$ such that the chromatic number of every strongly connected digraph  with no subdivisions of the oriented cycle  $C(k_1, k_2, \dots, k_n)$ is at most $g(k_1,k_2,\dots,k_n)$.\end{conjecture}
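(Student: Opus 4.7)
The plan is to argue by induction on the number of blocks $n$ of the target cycle $C(k_1,\dots,k_n)$. The base case $n=1$ is Bondy's theorem recalled in the introduction: any strong digraph of chromatic number at least $k_1$ contains a directed cycle of length at least $k_1$, which is by definition a subdivision of $C(k_1)$. The base case $n=2$ is the theorem of Cohen et al.\ also recalled in the introduction. So I assume that a constant $g(k_1,\dots,k_{n-1})$ exists for every tuple of lengths, fix $(k_1,\dots,k_n)$ with $n\geq 3$, and aim to produce $g(k_1,\dots,k_n)$.

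Given a strong digraph $D$ with $\chi(D)$ very large compared to $g(k_1,\dots,k_{n-1})$ and the $k_i$, I would first pass to a well-structured ``core''. Pick any vertex $v$ and stratify $V(D)$ into layers $L_0,L_1,\ldots$ by forward distance from $v$, then into layers $L'_0,L'_1,\ldots$ by distance to $v$. A standard pigeonhole argument on a consecutive window $L_i\cup L_{i+1}\cup\cdots\cup L_{i+K}$ of width $K=K(k_1,\dots,k_n)$ produces an induced subdigraph of chromatic number at least $\chi(D)/(K+1)$; intersecting this with an analogous window in the reverse BFS, the chromatic number is still an arbitrarily large function of the parameters while every pair of vertices now lies within controlled forward and backward distances. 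This localises the search for the subdivision to a region whose ``diameter'' is bounded in terms of the $k_i$.

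Inside this core I would apply the inductive hypothesis to extract a subdivision $H$ of $C(k_1,\dots,k_{n-1})$, and then try to complete $H$ into a subdivision of $C(k_1,\dots,k_n)$ by adding a last block $B_n$ of length $k_n$ attached with the correct orientation at the two designated endpoints of $H$. Strong connectivity guarantees a directed path of arbitrary length between these endpoints; the layered geometry of the core is used to force such a path of length exactly $k_n$ (achievable by detouring through deeper layers) that avoids the vertices of $H$ already used, giving the internal disjointness required by the subdivision.

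The main obstacle is precisely this internal disjointness. Merely finding a directed path of the right length is easy, but $H$ may occupy much of the core, and the new path might be forced to cross exactly the layers where $H$ is concentrated. The usual remedy is an iteration scheme: if every candidate completion of $H$ fails, record the offending internal vertex, delete it, and repeat; one must then argue that after boundedly many deletions either a successful completion is found or the chromatic number has been reduced below the threshold, a contradiction. A secondary but delicate obstacle is the alternation of block orientations: since consecutive blocks of $C(k_1,\dots,k_n)$ point in opposite directions, the junction vertices where $B_n$ meets $H$ must have the correct in/out-degree structure with respect to $H$, so the inductive application must be strengthened to produce an $H$ with \emph{prescribed} orientation type at its two end-blocks, not just any subdivision of $C(k_1,\dots,k_{n-1})$.
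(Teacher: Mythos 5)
This statement is a \emph{conjecture}, not a theorem of the paper: the paper offers no proof of it in general, and it remains open. What the paper actually proves is the single special case $C(k,1,1,1,1,1)$, by an entirely different route (a final spanning out-tree, a partition of the arcs into three classes, and the theorem that an oriented graph of chromatic number at least $8k-7$ contains an antidirected cycle of length at least $2k$). So there is no ``paper proof'' to compare yours against; you should be aware that you are attempting an open problem, and your sketch does not close it.

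Beyond that, the induction itself is ill-posed. In an oriented cycle the sources and sinks alternate, so every non-directed oriented cycle has an \emph{even} number of blocks; the object $C(k_1,\dots,k_{n-1})$ that your inductive hypothesis is supposed to produce simply does not exist when $n$ is even and $n-1\geq 3$ is odd. Even if you repair this by stepping down two blocks at a time, the reduction does not work as described: $H$ is a closed cycle with no ``two designated endpoints,'' so attaching a single extra block cannot yield $C(k_1,\dots,k_n)$ --- you would have to break $H$ open and insert two new blocks while preserving the lengths and orientations of all the others, which the inductive hypothesis gives you no handle on. Finally, the disjointness step is the entire difficulty and your iteration scheme does not resolve it: deleting one offending vertex at a time may destroy $H$ itself, you give no bound on the number of iterations, and no argument that the chromatic number stays above threshold throughout. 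These are not presentational gaps but missing ideas; this is why the known results (including this paper) attack fixed small block patterns with bespoke structural arguments rather than an induction on the number of blocks.
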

   
\noindent The following theorem shows that the strong connectivity assumption is necessary in Conjecture \ref{conj}: 
\begin{theorem}(Cohen et al.\ \cite{hjk})
For any positive integers $b,c$, there exists an acyclic digraph $D$ with $\chi(D) \geq c$ in which all oriented cycles have more than $b$ blocks.
\end{theorem}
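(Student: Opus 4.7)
My plan is to combine Erdős's classical construction of graphs with high chromatic number and high girth with a random acyclic orientation, closing the argument via the Lovász Local Lemma. First, I would invoke Erdős's theorem to obtain, for a girth parameter $g$ to be chosen sufficiently large in terms of $b$, a graph $H$ on $n$ vertices with $\chi(H)\geq c$ and girth at least $g$. Next, I would pick a uniformly random linear order $\pi$ on $V(H)$ and orient each edge $\{u,v\}\in E(H)$ from $u$ to $v$ iff $\pi(u)<\pi(v)$. The resulting digraph $D$ is acyclic, its underlying graph is $H$, so $\chi(D)=\chi(H)\geq c$.

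The second step relates the block count of an oriented cycle to the combinatorics of $\pi$. For a cycle $C=v_1v_2\cdots v_\ell v_1$ in $H$, while traversing the corresponding oriented cycle in $D$ a direction change occurs precisely at each vertex $v_i$ that is a $\pi$-local extremum among its two neighbors on $C$, and local maxima alternate with local minima around the cycle; hence the number of blocks equals twice the number of $\pi$-local maxima on $C$. Since each $v_i$ is a local maximum on $C$ with probability $1/3$ under a uniformly random $\pi$, one obtains $\mathbb{E}[\mathrm{blocks}(C)]=2\ell/3$, which comfortably exceeds $b$ once $\ell\geq g$ is chosen large in terms of $b$.

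The main obstacle is converting this expectation estimate into a simultaneous guarantee for \emph{every} oriented cycle of $D$, since naively union-bounding over the possibly exponentially many long cycles of $H$ is too costly. To overcome this, I would use a Lovász Local Lemma argument: for every cycle $C$ of $H$, define the bad event $A_C=\{\mathrm{blocks}(C)\leq b\}$, and observe that $A_C$ depends only on the restriction of $\pi$ to $V(C)$, so $A_C$ is mutually independent of the family $\{A_{C'}:V(C')\cap V(C)=\emptyset\}$. Combining a Chernoff/Azuma-type concentration estimate for the sum of (weakly dependent) local-max indicators on $C$ with the high-girth hypothesis gives a probability bound on $\Pr[A_C]$ decaying rapidly in $\ell$, and a careful count of cycles sharing a vertex with $C$ controls the dependency degree. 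Tuning $g$ (hence $n$) via Erdős's theorem so that the LLL hypothesis is satisfied yields an order $\pi$ for which no $A_C$ holds; the corresponding acyclic digraph $D$ has $\chi(D)\geq c$ and every oriented cycle of $D$ has more than $b$ blocks, as required.
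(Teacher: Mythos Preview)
The paper does not give its own proof of this theorem; it is quoted as a result of Cohen, Havet, Lochet and Nisse and used only to motivate the strong-connectivity hypothesis in Conjecture~\ref{conj}. So there is no in-paper argument to compare against, and I can only assess your proposal on its own merits.

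Your strategy is reasonable in spirit, but the LLL step does not close as written. Two issues combine to make it fail. First, a Chernoff/Azuma bound on the number of local maxima along a cycle of length $\ell$ yields only an exponential tail, $\Pr[A_C]\le e^{-c_1\ell}$ for some absolute constant $c_1>0$. Second, a bare invocation of Erd\H{o}s' theorem gives you no control on the maximum degree of $H$, and hence none on the number of cycles meeting a given cycle $C$: in a graph of maximum degree $\Delta$ the number of $\ell$-cycles through a fixed vertex can be as large as $\Delta^{\ell-1}$. Feeding these into the asymmetric LLL forces $\ln\Delta<c_1$, i.e.\ $\Delta$ bounded by an absolute constant; but any graph with $\chi\ge c$ has $\Delta\ge c-1$, so the condition is violated once $c$ is large. ``Tuning $g$'' does not help here, because the obstruction is in $\Delta$, not in the girth.

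Both defects are repairable, but not with the tools you named. A direct counting argument (permutations with at most $b/2$ cyclic peaks) gives the much stronger bound $\Pr[A_C]\le (C_b/\ell)^{\ell}$, which is super-exponential in $\ell$. If in addition you pick $H$ from a family with degree bounded in terms of $c$ alone (for instance $(p+1)$-regular Ramanujan graphs, which have $\chi\ge \tfrac12\sqrt{p+1}$ and arbitrarily large girth as $n\to\infty$), then for $g$ large compared to $\Delta\cdot C_b$ a plain union bound over all cycles already gives expected fewer than one bad cycle, and no LLL is needed. As stated, however, your proposal leaves the crucial dependency-degree estimate as an unjustified assertion and relies on a concentration inequality that is too weak for the job.
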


\noindent   In  \cite{hjk} Cohen et al.\  proved that the chromatic number of strong digraphs with no subdivisions of a two-blocks cycle $C(k_{1},k_{2})$ is at most   $O((k_{1}+k_{2})^4)$, which confirms Conjecture \ref{conj}  for the case of  two-blocks cycles.  Recently, this bound was  improved by  Kim et al.\ to $12k^2$, where $k=\max \{k_1,k_2\}$. \\

For the case of four-blocks cycles $C(k_1,k_2,k_3,k_4)$, the following results are obtained:
\begin{theorem}(Cohen et al.\ \cite{hjk})
	Let $D$ be a strongly connected digraph with no subdivisions of  $C(1,1,1,1)$, then the chromatic number of $D$ is at most $24$.
\end{theorem}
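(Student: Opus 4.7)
The plan is to prove the contrapositive: if $D$ is strongly connected and $\chi(D) \geq 25$, then $D$ contains a subdivision of $C(1,1,1,1)$, i.e.\ two ``source'' vertices $a, c$, two ``sink'' vertices $b, d$, and four internally disjoint directed paths $a \rightsquigarrow b$, $c \rightsquigarrow b$, $c \rightsquigarrow d$, $a \rightsquigarrow d$. The natural starting point is to stratify $D$ via an out-BFS from a fixed root $r$: the levels $L_i = \{u : d^+(r,u) = i\}$ partition $V(D)$ by strong connectivity, and every arc $(x,y)$ of $D$ satisfies $\mathrm{level}(y) \leq \mathrm{level}(x) + 1$. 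The ``forward'' arcs (those joining consecutive levels) therefore admit a proper $2$-coloring by parity of level, so the hypothesis $\chi(D) \geq 25$ forces the within-level and backward arcs to carry essentially the entire chromatic burden. A standard splitting argument then isolates either a single level $L_i$ or a pair of adjacent levels $L_i \cup L_{i+1}$ whose induced subdigraph has chromatic number at least $\lceil 25/2 \rceil = 13$.

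Next, I would exploit such a rich level. Every vertex $u \in L_i$ carries a directed path $P_u$ of length exactly $i$ from $r$ in the BFS tree, and by strong connectivity a directed path $Q_u$ from $u$ back to $r$. For any two vertices $u_1, u_2 \in L_i$, the union $P_{u_1} \cup Q_{u_1} \cup P_{u_2} \cup Q_{u_2}$ already traces a four-block closed walk through $r$. The aim is to choose $u_1, u_2$ together with two distinct ``sink'' vertices $b, d$ — one being $r$ itself, the other produced by invoking the high chromatic number of $D[L_i]$ together with Menger's theorem — so that one may reroute the $P_u$ and $Q_u$ into four genuinely internally disjoint paths from $\{a,c\} := \{u_1, u_2\}$ to $\{b, d\}$, which is exactly an $S$-$C(1,1,1,1)$.

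The main obstacle is precisely this internal-disjointness step. A single coincidence, say $P_{u_1}$ meeting $Q_{u_2}$ at an intermediate vertex, or the two candidate sinks being joined through a common ancestor in the BFS tree, destroys the configuration; one must then reroute around every such coincidence or else delete a bounded-chromatic piece of $L_i$ and repeat the argument on what remains. The precise constant $24$ should emerge from the careful bookkeeping of how many such pruning rounds are needed, each consuming a bounded number of colours, until the leftover high-chromatic core is forced to host the four internally disjoint paths.
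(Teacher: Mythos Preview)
The paper does not give its own proof of this statement: it is quoted as a known result of Cohen, Havet, Lochet, and Nisse and is used only as motivation. So there is no in-paper argument to compare against directly; one can only compare your plan to the general method the paper \emph{does} use (for the six-block case), which is inherited from Cohen et al.

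On its merits, your proposal has a genuine gap. Two points:

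First, the reduction step does not work as stated. From $\chi(D)\geq 25$ and a $2$-colouring of the forward arcs you correctly get that the spanning subdigraph $D'$ on the remaining (same-level and backward) arcs has $\chi(D')\geq 13$. But $D'$ contains backward arcs that may jump across arbitrarily many BFS levels, so there is no ``standard splitting argument'' that confines the high chromatic number to a single level $L_i$ or to $L_i\cup L_{i+1}$. That inference simply fails.

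Second, and more seriously, the construction of the four internally disjoint paths is the whole problem, and your sketch does not supply a mechanism. The union $P_{u_1}\cup Q_{u_1}\cup P_{u_2}\cup Q_{u_2}$ is two closed walks through $r$, not a four-block configuration: the tree paths $P_{u_1},P_{u_2}$ coincide from $r$ down to the least common ancestor of $u_1,u_2$, and the return paths $Q_{u_j}$ are completely uncontrolled (they may revisit the tree, each other, and the $u_i$ arbitrarily). Producing a second sink ``by Menger's theorem'' is not a step one can carry out here, since you have no $2$-connectivity hypothesis and no identified pair of terminals. Saying that obstructions can be ``rerouted around or deleted at bounded chromatic cost, and the constant $24$ will emerge from the bookkeeping'' is a hope, not an argument: you have not exhibited any invariant that decreases, nor any bound on the chromatic cost of a single pruning round.

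By contrast, the method this paper uses (and which comes from Cohen et al.) is structurally different. One takes a \emph{final} spanning out-tree $T$ (Proposition~\ref{finaltree}), so that every backward arc goes to an ancestor, and then partitions the non-tree arcs according to whether they run along an ancestor--descendant pair or cross between distinct branches. The point is that the tree itself supplies the internally disjoint connecting paths: for a crossing arc $(x,y)$ one has the canonical tree paths $T[z,x]$ and $T[z,y]$ from the least common ancestor $z$, and these are disjoint by construction. The chromatic bound then comes from Theorem~\ref{antidi} (the antidirected-cycle theorem) applied to each arc class, combined via Lemma~\ref{far}, rather than from any Menger-type or iterative-pruning argument.
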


\begin{theorem} (Al-Mniny \cite{4blocks}) Let $k_1$ be a positive integer and let $D$ be a strongly connected digraph  with no subdivisions of $C(k_1,1,1,1)$, then the chromatic number of $D$ is at most $8^{3}.k_1$.
\end{theorem}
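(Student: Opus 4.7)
The plan is to argue by contradiction: suppose $D$ is a strongly connected digraph with $\chi(D) > 8^3 k_1$ that contains no subdivision of $C(k_1,1,1,1)$, and produce an $S$-$C(k_1,1,1,1)$ inside $D$. Recall that an $S$-$C(k_1,1,1,1)$ is determined by two sources $s_1,s_2$, two sinks $t_1,t_2$, and four internally disjoint directed paths $P_{11}:s_1\!\to\!t_1$, $P_{12}:s_1\!\to\!t_2$, $P_{21}:s_2\!\to\!t_1$, $P_{22}:s_2\!\to\!t_2$, with $l(P_{11})\geq k_1$. The number $8^3$ suggests a three-stage ``peeling'' argument: at each stage a constant factor of $8$ is lost in $\chi$ in exchange for one of the three unit-length blocks being fixed in an internally-disjoint fashion, after which the long block of length $\geq k_1$ is extracted from what remains.

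First I would exploit the strong connectivity of $D$ together with Bondy's theorem to obtain a directed cycle $\mathcal{C}$ of length $\geq \chi(D) > 8^3 k_1$, providing a large reservoir of long directed paths. Then, fixing a vertex $v_0$ on $\mathcal{C}$, I would perform a BFS leveling of $D$ from $v_0$, setting $L_i=\{x:\mathrm{dist}(v_0,x)=i\}$. Arcs then split cleanly into ``tree arcs'' (from $L_i$ to $L_{i+1}$) and ``backward arcs'' (from $L_i$ to $L_j$ with $j\leq i$). A standard argument shows that one can partition the levels into $8$ residue classes mod an appropriate constant so that some restricted subdigraph $D_1\subseteq D$ keeps $\chi(D_1)\geq \chi(D)/8$, while simultaneously making one of the desired short blocks realizable: a short backward path $P_{21}$ giving a subdivided arc from $s_2$ to $t_1$.

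Next I would remove the internal vertices of $P_{21}$ from $D_1$ and apply a second factor-$8$ decomposition, working inside $D_2\subseteq D_1\setminus\mathrm{int}(P_{21})$ with $\chi(D_2)\geq \chi(D_1)/8$. Here the leveling is re-done with respect to a vertex chosen so that the first short block survives untouched, and a second unit block $P_{12}:s_1\to t_2$ is extracted disjointly from $P_{21}$. A third and final peeling step yields $D_3\subseteq D_2\setminus\mathrm{int}(P_{12})$ with $\chi(D_3)\geq \chi(D_2)/8\geq k_1$. Inside $D_3$, Gallai--Hasse--Roy--Vitaver supplies a directed path of length $\geq k_1-1$, which (combined with strong connectivity to the already-chosen corners) is upgraded to the long block $P_{11}:s_1\to t_1$ of length $\geq k_1$. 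The remaining short block $P_{22}:s_2\to t_2$ is produced from a Menger-type linkage inside the still-strongly-connected remainder.

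The main obstacle, as always for subdivision-extraction arguments, is \emph{internal disjointness} of the four paths while simultaneously insisting that $P_{11}$ be long. The short blocks share corners ($s_1$ with $P_{12}$, $t_1$ with $P_{21}$, etc.), so their extractions cannot be done naively in parallel; the three-stage recursive peeling above is precisely what keeps the $\chi$-loss multiplicative and at most $8^3$, rather than compounding into something exponentially worse. The delicate bookkeeping is choosing, at each stage, a leveling and a residue class in which both the previously fixed corners remain present and the next block can be inserted; getting this coordination right is where the bulk of the proof's technical work lies, and it is what ultimately forces the $k_1$-linear bound $8^3 k_1$.
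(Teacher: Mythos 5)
This statement is quoted from Al-Mniny's paper and is not proved in the present one, but the present paper's Section 3 is the direct six-block analogue of that proof, so the intended argument is clear: take a final spanning out-tree $T$ (Proposition \ref{finaltree}), partition $V(D)$ into $k_1$ classes by level modulo $k_1$ (so that any ancestor--descendant pair inside one class is joined by a tree path of length at least $k_1$, which is where the long block comes from), split the arcs of each class into the three types $A_1,A_2,A_3$, and bound each $D_i^j$ by $8$ using Theorem \ref{antidi} with $k=2$: if $\chi(D_i^j)\geq 9$ there is an antidirected cycle of length at least $4$, and a case analysis on its position in $T$ produces an $S$-$C(k_1,1,1,1)$. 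The bound $8^3\cdot k_1$ is then Lemma \ref{far} applied three times, times the $k_1$ classes. Your proposal takes a genuinely different route, and unfortunately it has gaps that are not cosmetic.

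Concretely: (i) your factor of $8$ is reverse-engineered rather than derived --- you never state a lemma of the form ``if $\chi(D)\geq 8c$ then there is a subdigraph of chromatic number at least $c$ together with a realizable unit block disjoint from it,'' and no such statement follows from a ``standard argument'' about residue classes of a BFS leveling; the $8$ in the literature comes specifically from the antidirected-cycle theorem, which you do not invoke. (ii) The sequential peeling is incompatible with internal disjointness in exactly the way you flag but do not resolve: after you extract $P_{21}$ and pass to $D_2$, the GHRV path you later find in $D_3$ has endpoints with no a priori relation to the corners $s_1,t_1$ already fixed, and the connecting paths supplied by strong connectivity (or by a ``Menger-type linkage'') may pass through the interiors of the blocks already built. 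Controlling this is the entire difficulty, and the out-tree approach solves it structurally (all connections are tree paths whose relative positions are governed by the ancestor relation), whereas your sketch leaves it open. (iii) The long block cannot be obtained by GHRV plus strong connectivity as you describe: a directed path of length $k_1-1$ somewhere in $D_3$, extended to $s_1$ and $t_1$ by arbitrary directed paths, gives a walk whose blocks you cannot control; in the actual proof the length-$\geq k_1$ block is guaranteed combinatorially because the two endpoints lie in the same level class modulo $k_1$ and are ancestor-related in $T$. As written, the proposal is a plan for a proof rather than a proof.
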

\begin{theorem} (Al-Mniny and Zayat \cite{zayat})
 		Let $D$ be a strongly connected digraph having no subdivisions of  $C(k_1,1,k_3,1)$ and let  $k= \textrm{max} \{k_1, k_3\}$, then the chromatic number of $D$ is at most $36. (2k) .(4k+2)$.
 \end{theorem}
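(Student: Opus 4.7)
The plan is to argue by contradiction: assume $D$ is strongly connected, contains no subdivision of $C(k_1,1,k_3,1)$, and $\chi(D) > 36 \cdot (2k) \cdot (4k+2)$ with $k = \max\{k_1,k_3\}$. The strategy is to extract from $D$ a suitably dense subdigraph equipped with a directed BFS layering, then exhibit the forbidden subdivision by combining two long directed paths with two short reverse connectors.

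As a first step I would pass to a dense subdigraph $D_0 \subseteq D$ of chromatic number at least $\chi(D)/36$ in which every vertex has both large in-degree and large out-degree; this is obtained by the standard procedure of iteratively deleting vertices of small semi-degree and restricting to a strong component. Performing a directed BFS from a vertex $v_0 \in V(D_0)$ yields a partition $V(D_0) = L_0 \sqcup L_1 \sqcup \cdots$ by directed distance from $v_0$. A parity/pigeonhole argument localises the chromatic activity on one bipartite strip $L_i \cup L_{i+1}$, which reduces the problem to a subdigraph whose forward arcs are well controlled by the layering.

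Next, I would invoke the Gallai-Hasse-Roy-Vitaver theorem inside the dense piece to produce directed paths of length $\geq 2k \geq k_1 + k_3$, and by iterating the argument after peeling off one such path, extract two vertex-disjoint directed subpaths $P_1$ of length $\geq k_1$ and $P_3$ of length $\geq k_3$ whose endpoints lie in prescribed layers. Writing $a,b$ for the initial and terminal vertices of $P_1$ and $c,d$ for those of $P_3$, a subdivision of $C(k_1,1,k_3,1)$ is obtained as soon as we can realise the two short reverse blocks: a directed connector from $c$ to $b$ and one from $a$ to $d$, internally disjoint from $P_1 \cup P_3$. The factor $4k+2$ in the bound is the budget of possible endpoints: among the abundant choices of $(a,b,c,d)$ provided by the large semi-degrees, a pigeonhole argument produces a quadruple admitting both connecting arcs simultaneously.

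The main obstacle I expect is the last step: enforcing that the two connectors $c \to b$ and $a \to d$ are internally disjoint from one another and from $P_1 \cup P_3$. Without the layering structure from the first step, the connectors can easily be forced to reuse vertices of the long paths, blocking every potential configuration. The role of the BFS layering is therefore essential: it separates the forward zone carrying $P_1$ and $P_3$ from the backward zone where the short reverse connectors live, so that a careful choice of endpoints $a,b,c,d$ yields genuinely disjoint connectors and hence the forbidden subdivision, contradicting the hypothesis on $\chi(D)$.
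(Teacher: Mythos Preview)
This theorem is not proved in the present paper at all: it is quoted from Al-Mniny and Zayat \cite{zayat} as background. So there is no proof here to compare your attempt against. That said, the method used throughout this paper for its own results (and almost certainly the method behind the cited bound, given the multiplicative shape $36\cdot(2k)\cdot(4k+2)$) is \emph{not} the density--plus--BFS--plus--Gallai--Roy scheme you outline. It is the final spanning out-tree technique: take a final out-tree $T$, split $V(D)$ into classes by level modulo an appropriate parameter, split the arcs of each class into the three types $A_1$ (forward to a descendant), $A_2$ (backward to an ancestor), $A_3$ (cross), bound the chromatic number of each spanning subdigraph separately, and multiply the bounds via Lemma~\ref{far}. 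The three factors in $36\cdot(2k)\cdot(4k+2)$ are exactly the signature of such a product.

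Your proposal, by contrast, has real gaps that I do not see how to close. First, the opening move --- ``pass to a dense subdigraph losing only a factor $36$ in chromatic number, with every vertex of large in- and out-degree, and then restrict to a strong component'' --- is not a standard lemma, and restricting to a strong component can drop the chromatic number arbitrarily. Second, the claim that a single BFS strip $L_i\cup L_{i+1}$ carries most of the chromatic number is unjustified; levels of a BFS out-tree do not behave like a proper $2$-colouring. Third, and most seriously, the final step is not a pigeonhole argument at all: having many candidate endpoints $(a,b,c,d)$ does not produce the two specific directed connectors $c\to b$ and $a\to d$ that you need, let alone guarantee they are internally disjoint from $P_1\cup P_3$ and from each other. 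You acknowledge this is the ``main obstacle'', but the sketch offers no mechanism to overcome it; the layering separates forward from backward only in a weak sense and does not by itself force cross-arcs between your chosen endpoints to exist.
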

 
\noindent For the case of six-blocks cycles, Conjecture \ref{conj} is still unresolved. Also no special cases of six-blocks cycles are studied so far. In this paper, six-blocks cycles are treated for the first time.  We confirm  in this paper Conjecture \ref{conj} for the six-blocks cycles $C(k,1,1,1,1,1)$, namely $g(k,1,1,1,1,1)$ $=O(k)$.  \\

 The paper is organized as follows: In Section 2, we introduce some notations and terminologies that will be used throughout the coming sections.  In Section 3, we prove the existence of subdivisions of $C(k,1,1,1,1,1)$ in strong digraphs by using the technique of digraphs decomposing and the simple notion of a  maximal-tree.  
 
\section{Preliminaries}
Throughout this section, some basic definitions and terminologies that will be used in the coming sections are introduced.\\
 
  A graph $G$ is called $d$-degenerate, if every  subgraph of $G$ has a vertex with at most $d$ neighbors. By inductive techniques, one may easily show that the following statement holds:
\begin{lemma}\label{degenerate}
	If $G$ is $d$-degenerate graph, then $G$ is $(d+1)$-colorable.
\end{lemma}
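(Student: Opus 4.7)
The plan is to proceed by induction on the number of vertices $n = |V(G)|$. The base case $n \leq d+1$ is trivial: assign each vertex its own distinct color from a palette of $d+1$ colors, producing a proper coloring automatically.

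For the inductive step, I would invoke the definition of $d$-degeneracy applied to $G$ itself (regarded as a subgraph of itself) to extract a vertex $v \in V(G)$ with at most $d$ neighbors. Consider $G' = G - v$. Since every subgraph of $G'$ is also a subgraph of $G$, the graph $G'$ inherits $d$-degeneracy, and by the inductive hypothesis $G'$ admits a proper coloring with $d+1$ colors. To complete the coloring of $G$, observe that $v$ has at most $d$ neighbors in $G$, so its neighborhood uses at most $d$ of the $d+1$ available colors; pick any color not appearing on a neighbor of $v$ and assign it to $v$. The resulting coloring is proper on all of $G$.

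Since every step reduces to a direct application of the definition of $d$-degeneracy and a trivial pigeonhole count on the colors available for $v$, no real obstacle arises. The only point requiring a brief verification is the inheritance of $d$-degeneracy under vertex deletion, which is immediate from the fact that subgraphs of $G - v$ are subgraphs of $G$.
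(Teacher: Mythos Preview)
Your proof is correct and is exactly the standard inductive argument the paper alludes to; the paper itself does not spell out a proof beyond remarking that ``by inductive techniques, one may easily show'' the lemma holds. There is nothing to add.
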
 

Let  $D_{1}$ and $D_{2}$ be two digraph. We define $D_{1} \cup D_{2}$ to be the digraph whose set of vertices is $V(D_{1}) \cup V(D_{2})$ and whose set of arcs is $A(D_{1}) \cup A(D_{2})$. The next well known lemma will be useful for the coming section, as it plays a central role in giving an upper bound for $ \chi(D)$: We partition the set of arcs of $D$ into $A_1, A_2, \dots A_t$, then we give an upper bound for $ \chi(D_i)$, where $D_i$ is the spanning subdigraph of $D$ with arc set $A_i$, for all $i \in \lbrace 1, \dots , t \rbrace$.
 \begin{lemma}\label{far}
 	$\chi(D_{1} \cup D_{2}) \leq \chi(D_{1}) \times \chi(D_{2})$ for any two digraphs  $D_{1}$ and $D_{2}$.
 \end{lemma}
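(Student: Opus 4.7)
The plan is to establish the inequality by a standard product-coloring argument. First I would invoke the definitions: let $k_1 = \chi(D_1)$ and $k_2 = \chi(D_2)$, and fix proper colorings $c_1 \colon V(D_1) \to \{1,\dots,k_1\}$ and $c_2 \colon V(D_2) \to \{1,\dots,k_2\}$ of the underlying graphs $G(D_1)$ and $G(D_2)$.

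Next, because $V(D_1)$ and $V(D_2)$ need not coincide, I would extend each $c_i$ to the whole vertex set $V(D_1) \cup V(D_2)$ by assigning an arbitrary color in $\{1,\dots,k_i\}$ (say, the color $1$) to any vertex outside $V(D_i)$. This extension does not affect propriety on the edges of $G(D_i)$, since those edges have both endpoints already colored by $c_i$ in the original way. Then I would define the product coloring
\[
c \colon V(D_1) \cup V(D_2) \longrightarrow \{1,\dots,k_1\} \times \{1,\dots,k_2\}, \qquad c(v) = \bigl(c_1(v), c_2(v)\bigr).
\]

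To verify that $c$ is a proper coloring of $G(D_1 \cup D_2)$, I would take an arbitrary edge $uv$ of $G(D_1 \cup D_2)$; it arises from some arc in $A(D_1) \cup A(D_2)$. If the arc lies in $A(D_1)$, then $uv$ is an edge of $G(D_1)$, so $c_1(u) \neq c_1(v)$, hence $c(u) \neq c(v)$. The symmetric argument covers the case when the arc lies in $A(D_2)$. Since the codomain of $c$ has size $k_1 k_2$, this yields $\chi(D_1 \cup D_2) \leq k_1 k_2 = \chi(D_1) \times \chi(D_2)$.

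There is essentially no serious obstacle here: the only mild point to watch is the extension of each $c_i$ to vertices outside $V(D_i)$, which is necessary so that the product coloring is well-defined on all of $V(D_1) \cup V(D_2)$. The argument is otherwise the classical product construction and requires no properties specific to digraphs beyond the convention that $\chi$ is taken on the underlying graph.
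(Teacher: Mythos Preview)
Your argument is correct and is exactly the standard product-coloring proof of this well-known fact. The paper itself does not supply a proof of this lemma; it simply states it as a well-known result, so there is nothing to compare against beyond noting that your write-up is the expected one.
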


\noindent  Let $P=x_1x_2 \dots x_t$ be a directed path of a digraph $D$. We denote by $P[x_i,x_j]$, $P[x_i,x_j[$, $P]x_i,x_j]$ and $P]x_i,x_j[$, the directed subpath of $P$ from $x_i$ to $x_j$, from $x_i$ to $x_{j-1}$, from $x_{i+1}$ to $x_j$ and from $x_{i+1}$ to $x_{j-1}$ respectively. Similarly this definition is extended to  cycles. Let $x$ be a vertex of a digraph $D$, we denote by ${N_D}^{+}(x)$ (resp. $N_D^{-}(x)$)  the set of vertices $y$ such that $(x,y)$ (resp. $(y,x)$) is an arc of $D$. The cardinality of $N^{+}_D(x)$ (resp. $N^{-}_D(x)$)  is denoted by $d^{+}_D(x)$ (resp. $d^{-}_D(x)$), and is called the out-degree (resp. in-degree) of $x$. Define $\Delta^{+}(D)= \textrm{max}_{x \in V(D)} d^{+}(x)$ to be the maximum out-degree of $D$. Let $C$ be an oriented cycle in a digraph $D$, a vertex $x$ of $C$ is said to be a source if $d^{-}_C(x)=0$, and a vertex $y$ is said to be a sink if $d^{+}_C(x)=0$. For a vertex $x$ of a graph $G$, the set of all neighbors of $x$ in $G$ is denoted by $N_G(x)$,  the cardinality of $N_G(x)$ is denoted by $d_G(x)$, and we define $\delta(G):= \textrm{min}_{x \in V(G)} d_G(x)$.  \\

We define an out-tree to be an oriented digraph containing no cycles, where all vertices have in-degree at most 1. So, an out-tree has a unique vertex of in-degree $0$, called the source. Let $D$ be a digraph with a spanning  out-tree $T$, let $x$ be a vertex of $T$, and let $r$ be the  source of $T$.   Denoted by $l_{T}(x)$, the number of vertices of the unique $rx$-directed path in $T$. We call $l_{T}(x)$ the level of $x$  with respect to $T$. Define $L_{i}(T):=\{x \in V(T) | l_{T}(x)=i\}$, for any positive integer $i$. The ancestors of $x$ are the vertices that belong to $T[r,x]$. If $y$ is an ancestor of $x$ with respect to $T$, we write $ y \leqslant_{T} x$. Denote by  $T_{x}$ the subtree of $T$ rooted at $x$ and induced by the set of vertices $y$ of $D$ such that  $x$ is an ancestor of $y$.  For two vertices $x$ and $y$ of $D$, the least common ancestor $z$ of $x$ and $y$, abbreviated by $\textrm{l.c.a}(x,y)$,   is the common ancestor of $x$ and $y$  with the highest level in $T$. As $r$ is a common ancestor of all vertices, then the latter notion is well-defined.  An arc $(x,y)$ of $D$ is said to be forward with respect to $T$ if $l_{T}(x) < l_{T}(y)$. Otherwise,  $(x,y)$ is  called a backward arc. We say that $T$ is  a final out-tree of $D$, if for every backward arc $(x,y)$ of $D$,  $y \leqslant_{T} x$. Thus, whenever $T$ is final out-tree of $D$,   one may easily see that $D[L_i(T)]$ is an  empty digraph  for all $i \geq 1$. \medbreak 
\noindent 
The following proposition shows an interesting structural property on  digraphs having a spanning out-tree:
\begin{proposition}\label{finaltree}
Let $D$ be a digraph containing a  spanning out-tree $T$, then $D$ contains a final out-tree.
\end{proposition}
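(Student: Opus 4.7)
The plan is an extremal argument on the total level sum. Among all spanning out-trees of $D$ I would pick one, call it $T^{*}$, that maximizes $\sigma(T):=\sum_{v\in V(D)} l_{T}(v)$. Since each level satisfies $l_{T}(v)\le |V(D)|$, the quantity $\sigma$ is bounded, so a maximizer exists. The claim is that any such $T^{*}$ is automatically final.

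To prove this, suppose for contradiction that $T^{*}$ is not final: there is a backward arc $(x,y)\in A(D)$ with $y\not\leqslant_{T^{*}} x$. Since the source $r$ is an ancestor of every vertex, $y\neq r$, so $y$ has a unique in-neighbour $p$ in $T^{*}$ and $l_{T^{*}}(y)=l_{T^{*}}(p)+1$. Because $y$ is not an ancestor of $x$ and $x\neq y$, the vertex $x$ does not belong to the subtree $T^{*}_{y}$. Now form $T'$ from $T^{*}$ by deleting the arc $(p,y)$ and inserting the arc $(x,y)$. Removing $(p,y)$ splits $T^{*}$ into the subtree $T^{*}_{y}$ (rooted at $y$) and its complement (rooted at $r$); since $x$ lies in the latter, adding $(x,y)$ reconnects the two pieces into a single out-tree in which $y$ still has in-degree $1$ and no directed cycle is created. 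Hence $T'$ is again a spanning out-tree of $D$ rooted at $r$.

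It then remains to compare $\sigma(T')$ and $\sigma(T^{*})$. Vertices outside $T^{*}_{y}$ keep their levels, and for every $v\in T^{*}_{y}$ the subpath from $y$ to $v$ inside the subtree is unchanged, so
\[
l_{T'}(v)-l_{T^{*}}(v)=l_{T'}(y)-l_{T^{*}}(y)=\bigl(l_{T^{*}}(x)+1\bigr)-\bigl(l_{T^{*}}(p)+1\bigr)=l_{T^{*}}(x)-l_{T^{*}}(p).
\]
Since $(x,y)$ is backward, $l_{T^{*}}(x)\ge l_{T^{*}}(y)=l_{T^{*}}(p)+1$, so this difference is strictly positive. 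Summing over $v\in T^{*}_{y}$ gives $\sigma(T')>\sigma(T^{*})$, contradicting the choice of $T^{*}$.

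The only real obstacle is checking that the local swap $(p,y)\mapsto(x,y)$ actually produces an out-tree, which is exactly why the condition $y\not\leqslant_{T^{*}} x$ (forcing $x\notin T^{*}_{y}$) is needed; everything else is just the level bookkeeping above. The rest of the argument is routine, so the proposition should follow immediately from the extremal choice of $T^{*}$.
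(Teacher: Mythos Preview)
Your proof is correct and follows essentially the same approach as the paper: both arguments use the identical arc-swap $(p,y)\mapsto(x,y)$ and the observation that this strictly increases vertex levels, with the paper phrasing it as an iterative process that must terminate and you phrasing it as an extremal choice maximizing $\sum_v l_T(v)$. Your version makes the termination/well-foundedness explicit via the potential $\sigma$, but the underlying idea is the same.
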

\begin{proof} 
Let $T_{0}:=T$. If $T_{0}$ is final, then we are done. Otherwise, $D$ contains two vertices $x$ and $y$, such that $(x,y)$ is a backward arc with respect to $T$ and $y$ is not an ancestor of $x$.  Let $T_{1}$ be the out-tree obtained from $T_{0}$ by removing the arc of head $y$ in $T_{0}$ and adding  $(x,y)$  to $T_{0}$. Clearly, $l_{T_1}(v)\geq l_{T_0}(v)$ for every $v\in V(D)$, and there exists a vertex $y$ such that $l_{T_1}(y)> l_{T_0}(y)$. Now as the level of a vertex cannot increase infinitely, then after a finite number of repeating the above process,  we obtain an out-tree  which is final.$\hfill {\square}$
\end{proof}

 \section{The existence of $S$-$C(k,1,1,1,1,1)$ in strong digraphs}
  
 In this section we found an upper bound of the chromatic number of strongly connected digraphs with no subdivisions of $C(k,1,1,1,1,1)$.  Let  $D$ to be a digraph with no subdivisions of $C(k,1,1,1,1,1)$ and let  $T$ be a final spanning out-tree of $D$ rooted at $r$. Consider the partition of the vertex-set of $D$: $V_1, V_2, \dots,V_{k}$, where   $V_{i}:=\cup_{\alpha \geq 0} L_{i+\alpha (k)}(T)$ for all  $1 \leq i \leq k$.  Now denote by $D_{i}$  the subdigraph of $D$ induced by $V_{i}$, and  partition the arc-set  of $D_{i}$ as follows:$$A_{1}:=\{(x,y)  | l_{T}(x) < l_{T}(y) \hspace{1mm} \textrm{and} \hspace{1mm} x\leqslant_{T}y\};$$ $$A_{2}:=\{(x,y)  | l_{T}(x) > l_{T}(y) \hspace{1mm} \textrm{and} \hspace{1mm} y \leqslant_{T}x\};$$
 $$A_{3}:=A(D_{i}) \setminus (A_{1} \cup A_{2}).$$
 In what follows, denote by  $D_{i}^{j}$   the spanning subdigraph of $D_{i}$ whose arc-set is $A_{j}$, for $1 \leq i \leq k$ and $j=1,2,3$. \vspace{2mm}\\
 The following theorem is very crucial in our later analysis:
 	\begin{theorem}(Cohen et al.\ \cite{hjk})\label{antidi}
	Let $D$ be an oriented graph and $k$ be an integer greater than $1$. If $\chi(D) \geqslant 8k-7$, then $D$ contains an antidirected cycle  of length at least $2k$.
\end{theorem}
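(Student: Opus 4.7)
The plan is to pass to a high minimum-degree sub-oriented-graph, then locate an antidirected cycle by analysing a longest antidirected path.

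First, a standard degeneracy argument (essentially the contrapositive of Lemma~\ref{degenerate}) yields a sub-oriented-graph $H\subseteq D$ whose underlying graph satisfies $\delta(G(H))\ge 8k-8 = 8(k-1)$: if every subgraph of $G(D)$ had a vertex of degree less than $8k-8$, then $G(D)$ would be $(8k-8)$-degenerate and hence $(8k-7)$-colorable, contradicting $\chi(D)\ge 8k-7$. I work in $H$ henceforth. At every vertex $v$ of $H$ the bound $d^+_H(v)+d^-_H(v)\ge 8(k-1)$ forces $\max(d^+_H(v),d^-_H(v))\ge 4(k-1)$; I call the direction achieving this the \emph{heavy side} of $v$.

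Next, I take a longest antidirected path $P=v_0v_1\cdots v_m$ in $H$, and orient the labels so that $v_0$ is a source of $P$, i.e.\ $v_0\to v_1$; then even-indexed (resp.\ odd-indexed) vertices are sources (resp.\ sinks) of $P$. A maximality analysis at $v_0$ shows that every out-neighbour of $v_0$ in $H$ must already lie on $P$: for any $w\notin V(P)$ with $v_0\to w$, the walk $w\,v_0\,v_1\cdots v_m$ would be a longer antidirected path. An analogous statement holds at $v_m$, with the relevant direction (in or out) determined by whether that endpoint is a source or a sink of $P$. Combining this constraint with the heavy-side inequality at the endpoints already yields $m\ge 2k-1$.

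For the closing step I look for a chord at one of the endpoints that turns a subpath of $P$ into an antidirected cycle of length at least $2k$. The parity of the landing position is crucial: a chord $v_0\to v_j$ closes $v_0v_1\cdots v_j$ into an antidirected cycle \emph{if and only if} $j$ is odd (otherwise $v_j$, already a source in $P$, would inherit an in-arc from $v_0$ and cease to be a source or sink of the cycle); symmetrically for chords incident with $v_m$. In the worst case a majority of the $\ge 4(k-1)$ out-neighbours of $v_0$ on $P$ may land at the wrong parity, so the argument must use both endpoints simultaneously: combining the good chords at $v_0$ and at $v_m$, a double-counting plus a position estimate forces at least one chord to subtend a subpath of length $\ge 2k-1$, and together with that subpath it produces the required antidirected cycle of length $\ge 2k$.

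The main obstacle, mirrored in the shape $8k-7=8(k-1)+1$ of the hypothesis, is precisely this closing step: the factor $8$ is the product of three factors of $2$ corresponding to (i) the two endpoints of $P$, (ii) the two possible heavy sides (in or out) at each endpoint, and (iii) the two possible parities of the landing position of the chord. Keeping the cost of each of these at exactly a factor of $2$ is the delicate point; if any case lost an additional factor, the hypothesis would have to be strengthened.
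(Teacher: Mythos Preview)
The paper does not prove Theorem~\ref{antidi}; it is quoted from Cohen et~al.\ \cite{hjk} and used as a black box, so there is no in-paper proof to compare your attempt against. Your outline---pass to a sub-oriented-graph $H$ with $\delta(G(H))\ge 8(k-1)$ via degeneracy, take a longest antidirected path $P$, then close it with a chord at an endpoint---is indeed the standard framework and matches the approach of the original source.

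That said, your write-up has a genuine gap at exactly the point you flag as delicate. Maximality of $P$ forces only the \emph{out}-neighbours of a source endpoint $v_0$ onto $P$; the in-neighbours of $v_0$ are unconstrained, since prepending an in-neighbour $u$ produces $u\to v_0\to v_1$, which is not antidirected. Consequently your claim that ``the heavy-side inequality at the endpoints already yields $m\ge 2k-1$'' is unjustified: if the heavy side of $v_0$ happens to be the in-side, nothing bounds $d^{+}_H(v_0)$ from below, and the analogous failure can occur at $v_m$ simultaneously. The same issue undermines the closing step, where you need many chords of the correct type at an endpoint.

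The parity obstruction is likewise real, and ``double-counting plus a position estimate'' is only a slogan here. An out-chord $v_0\to v_j$ with $j$ even does not close an antidirected cycle, and the natural rotations of $P$ through such a chord break antidirectedness at $v_j$ (which is a source in $P$ and would acquire an in-arc). Turning your heuristic that items (i)--(iii) each cost exactly a factor~$2$ into an actual argument is the entire content of the proof; as written, that content is missing.
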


\subsection{Coloring $D_i^1$ and $D_i^2$}
 	\begin{proposition}\label{hello}
 	$\chi(D_{i}^{1}) \leqslant 24$ for all $i \in \{1, \dots, k\}$.
 \end{proposition}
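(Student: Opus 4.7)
I would argue by contradiction, using Theorem \ref{antidi}. Suppose $\chi(D_i^1) \geq 25 = 8 \cdot 4 - 7$. Applying Theorem \ref{antidi} with its parameter equal to $4$ to the oriented graph $D_i^1$ yields an antidirected cycle $\tilde{C}$ in $D_i^1$ of length at least $2 \cdot 4 = 8$. Write $\tilde{C}$ cyclically as $a_1, b_1, a_2, b_2, \dots, a_m, b_m$ with $m \geq 4$, the $a_j$'s being its sources and the $b_j$'s its sinks, with arc set $\{(a_j, b_j), (a_{j+1}, b_j) : 1 \leq j \leq m\}$ (subscripts mod $m$).

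Every arc $(a, b)$ of $\tilde{C}$ lies in $A_1$, so $a \leqslant_T b$ and $a, b \in V_i$; hence $l_T(b) - l_T(a)$ is a positive multiple of $k$, and the tree path $T[a, b]$ is a directed path in $D$ from $a$ to $b$ of length at least $k$. The plan is to extract from $\tilde{C}$ three sources $a_{i_1}, a_{i_2}, a_{i_3}$ and three sinks $b_{i_1}, b_{i_2}, b_{i_3}$ forming a length-$6$ antidirected closed walk
\begin{equation*}
a_{i_1} \to b_{i_1} \leftarrow a_{i_2} \to b_{i_2} \leftarrow a_{i_3} \to b_{i_3} \leftarrow a_{i_1}
\end{equation*}
using six arcs of $\tilde{C}$, and to replace the arc $(a_{i_1}, b_{i_1})$ by the tree path $T[a_{i_1}, b_{i_1}]$. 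Provided the internal vertices of this tree path avoid the five other chosen vertices, the resulting configuration is a subdivision of $C(k, 1, 1, 1, 1, 1)$ in $D$, contradicting the standing hypothesis that $D$ contains no such subdivision.

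To carry out the selection of $i_1, i_2, i_3$, I would exploit the structural constraint that for each sink $b_j$ the two sources $a_j, a_{j+1}$ are both ancestors of $b_j$ in $T$ and are therefore $T$-comparable. A natural first step is to let $a_{i_1}$ be a source of maximum $l_T$-level in $\tilde{C}$; then every internal vertex of $T[a_{i_1}, b_{i_1}]$ has level strictly greater than $l_T(a_{i_1})$, so it cannot coincide with any other source $a_j$. The remaining concern is that one of the sinks $b_{i_2}, b_{i_3}$ could still be an internal vertex of $T[a_{i_1}, b_{i_1}]$; here the plan is to use the flexibility offered by $m \geq 4$ (at least four sources available) to choose $i_2, i_3$ so that $b_{i_2}, b_{i_3}$ lie outside $T[a_{i_1}, b_{i_1}]$.

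The main obstacle I expect is precisely this last bookkeeping step: systematically ruling out, via a short case analysis on the $T$-comparability pattern of $\{a_1, \ldots, a_m\}$ and the level gaps in $V_i$ (which are multiples of $k$), every configuration in which a descendant sink of $a_{i_1}$ in $\tilde{C}$ obstructs the chosen tree path. The slack afforded by having $m \geq 4$ sources---rather than $m = 3$, which would come from merely invoking Theorem \ref{antidi} for antidirected $6$-cycles---is what guarantees that a collision-free selection of $(i_1, i_2, i_3)$ always exists, and this is precisely what fixes the numerical bound $24$ in the statement as one less than the threshold $8 \cdot 4 - 7$.
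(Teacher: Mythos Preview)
Your opening is correct: assuming $\chi(D_i^1)\ge 25$ and invoking Theorem~\ref{antidi} to obtain an antidirected cycle $\tilde C=a_1b_1\cdots a_mb_m$ with $m\ge 4$ is exactly how the paper begins. The gap is in the next step. You propose to select three sources $a_{i_1},a_{i_2},a_{i_3}$ and three sinks $b_{i_1},b_{i_2},b_{i_3}$ forming a closed antidirected walk of length~$6$ \emph{using six arcs of $\tilde C$}. But the underlying graph of $\tilde C$ is a single $2m$-cycle; for $m\ge 4$ its girth is $2m\ge 8$, so it contains no $6$-cycle at all. Concretely, with your arc set $\{(a_j,b_j),(a_{j+1},b_j)\}$, the requirement $(a_{i_2},b_{i_1})\in A(\tilde C)$ forces $i_2=i_1+1$, then $(a_{i_3},b_{i_2})\in A(\tilde C)$ forces $i_3=i_1+2$, and finally $(a_{i_1},b_{i_3})\in A(\tilde C)$ forces $3\equiv 0\pmod m$, i.e.\ $m=3$. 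So the very flexibility ``$m\ge 4$'' you rely on is what makes the selection impossible; there is no choice of $(i_1,i_2,i_3)$ to analyse.

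The paper's proof does not attempt to extract a $6$-cycle from the arcs of $\tilde C$. Instead it exploits a structural feature of $D_i^1$ that your outline touches on but does not use fully: any two sources sharing a sink are $T$-comparable, so, taking $x_0$ to be a source of maximum level, the two neighbouring sources $x_2,x_{m-1}$ lie on $T[r,x_0]$, say with $x_2\leqslant_T x_{m-1}\leqslant_T x_0$. The paper then follows $\tilde C$ from $x_2$ until it first produces a source $x_i\in T[r,x_{m-1}[$ whose successor sink $x_{i+1}$ lies in $T_{x_{m-1}}$, and uses \emph{tree paths between comparable sources} (e.g.\ $T[x_2,x_i]$ or $T[x_{m-1},x_{i+2}]$), concatenated with single arcs of $\tilde C$, to build the six blocks. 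In other words, several blocks of the resulting $S\text{-}C(k,1,1,1,1,1)$ are tree paths (plus one arc), not arcs of $\tilde C$, and this is precisely the missing idea in your plan.
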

 \begin{proof} Assume to the contrary  that $\chi(D_{i}^{1}) \geqslant 25$. Then Theorem \ref{antidi} implies that $D_{i}^{1}$ contains an antidirected cycle $C$ of length at least $8$. Let $x$ be a source of $C$ such that $l_{T}(x)$ is maximal. Let $y$ and $z$ be the out-neighbors of $x$ in $C$. As $C$ is an antidirected cycle, let $s$ and $t$ be the second inneighbor of $y$ and $z$ in $C$ respectively. The definition of $T$ and $D_i^1$ implies that $s$ and $t$ belongs to $T[r,x[$, where $r$ is the root of $T$. Assume without loss of generality that $l_{T}(t)<l_{T}(s)<l_{T}(x)$. For simplicity, let us enumerate $C$ as follows: $C:= x_{0}x_{1}\dots x_{m}$, with $x_0=x$, $x_1=z$, $x_2=t$, $x_m=y$, and $x_{m-1}=s$. Now since $C$ is a cycle (that is it must be closed), $l_{T}(x_2)< l_{T}(x_{m-1})$, and $x_2$ and $x_{m-1}$ are sources in $C$, then the definition of $T$ and $D_i^1$ implies that there exists $i\geq 2$ such that $x_i\in T[r,x_{m-1}[$, $x_{i+1}\in T_{x_{m-1}}$, $x_{i+2}\in \displaystyle{\bigcup_{h\in T[x_{m-1},x_0[}}T_h$, with $(x_i,x_{i+1})\in A(C)$ and $(x_{i+2},x_{i+1})\in A(C)$. Since otherwise, $C$ can't be closed, and this contradicts the definition of a cycle.   Since $x_{i+2}$ is  a source of $C$, then  the maximality of $l_{T}(x_0)$ implies that $l_{T}(x_{i+2})< l_{T}(x_{0})$. This is the reason behind concluding that $x_{i+2}\in \displaystyle{\bigcup_{h\in T[x_{m-1},x_0[}}T_h$. Clearly, since $x_{i+2}\in T_{x_{m-1}}$, then $x_{m-1}$ and $x_{i+2}$ are ancestors. We consider two cases:
 \begin{itemize}
 \item  If $x_i=x_2$, then since $C$ has length at least $8$, we have: $x_{4}\neq x_{m-1}$. This implies that $l(T[x_{m-1},x_{4}])\geq k$.  So the union of $T[x_{m-1},x_{4}]\cup (x_{4},x_{3})$, $(x_2,x_3)$, $(x_2,x_1)$, $(x_0,x_1)$, $(x_0,x_m)$, and $(x_{m-1},x_m)$ is a $S$-$C(k,1,1,1,1,1)$ in $D$, a contradiction.  
 \item Now if $i>2$, then either $x_2\leqslant_{T}x_i$ or $x_i\leqslant_{T}x_2$. Assume first that the former holds, then the union of $T[x_{2},x_{i}]\cup (x_{i},x_{i+1})$, $T[x_{m-1},x_{i+2}]\cup (x_{i+2},x_{i+1})$, $(x_{m-1},x_m)$, $(x_0,x_m)$, $(x_0,x_1)$, and $(x_2,x_1)$ is a $S$-$C(k,1,1,1,1,1)$ in $D$, a contradiction. Then the latter holds, and so the union of $T[x_{i},x_{2}]\cup (x_{2},x_{1})$, $(x_{0},x_1)$, $(x_0,x_m)$, $(x_{m-1},x_m)$, $T[x_{m-1},x_{i+2}]\cup (x_{i+2},x_{i+1})$ and $(x_i,x_{i+1})$ is a $S$-$C(k,1,1,1,1,1)$ in $D$, a contradiction.
 \end{itemize} This completes the proof. $\hfill {\square}$
\end{proof} 
 
 \begin{proposition}\label{prop2}
 	$\chi(D_{i}^{2}) \leqslant 7$ for all $i \in \{1,\dots , k\}$.
 \end{proposition}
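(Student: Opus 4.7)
The plan is to argue by contradiction: suppose $\chi(D_i^2) \geq 8$, and derive a subdivision of $C(k,1,1,1,1,1)$ in $D$. The starting observation, analogous to the setup of Proposition \ref{hello}, is that every arc of $D_i^2$ lies in $A_2$, i.e., points from a $T$-descendant to a strict $T$-ancestor inside $V_i$. Consequently, at every vertex $v$ of $D_i^2$ the out-neighbors form a chain along $T[r,v]$ and the in-neighbors are $T$-descendants of $v$ sitting inside $V_i$. Passing to a $7$-critical subgraph $H \subseteq D_i^2$ with $\delta(H) \geq 7$ and choosing $v \in V(H)$ with $l_T(v)$ maximum, one sees that every $H$-neighbor of $v$ is a $T$-ancestor of $v$, producing a chain $u_1 <_T u_2 <_T \cdots <_T u_7 <_T v$ in $V_i \cap V(H)$ together with arcs $(v,u_j)\in A_2$ for $1 \leq j \leq 7$.

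The second step is to extract additional arc structure from the chain, needed to supply a second source of the forbidden cycle. Since each $u_j$ also has degree at least $7$ in $H$, I would analyse $u_1$'s (or $u_7$'s) neighborhood. There are two possibilities: either some $u_j$ has an $A_2$-out-arc to a strict $T$-ancestor $u_0 <_T u_j$ still in $V(H)$, which extends the chain upward; or $u_j$ has additional $T$-descendants in $V(H)$ carrying $A_2$-arcs back to $u_j$, producing a second ``source-descendant'' $v' \neq v$. After iterating this refinement along the chain I may arrange that either the chain of $v$-connected ancestors is long enough, or a second vertex $v'$ carries $A_2$-arcs to at least two distinct $u_j$'s.

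Finally, I would assemble the subdivision. One tree subpath $T[u_a,u_{a+1}]$ between two consecutive ancestors in the chain has length at least $k$ (consecutive $V_i$-vertices on the chain differ by at least $k$ in $l_T$), and it serves as the length-$k$ block. The five unit blocks are then filled by five carefully chosen $A_2$-arcs among $\{(v,u_j)\}\cup\{(v',u_j)\}$, matched in the bipartite $C_6$-pattern $s_1\to t_1,\ s_2\to t_1,\ s_2\to t_2,\ s_3\to t_2,\ s_3\to t_3,\ s_1\to t_3$ with sources $\{u_a, v, v'\}$ and sinks drawn from the remaining $u_j$'s. The indices must be selected so that $u_a$, $u_{a+1}$, and the internal vertices of $T[u_a,u_{a+1}]$ do not appear among the endpoints of the five arc-blocks, guaranteeing internal disjointness of the six sub-paths. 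The main obstacle is precisely this matching: securing, in the second step, a source $v'$ with two $A_2$-arcs into the chain, and then picking the indices for the three sources and three sinks so that the tree subpath stays clear of all arc-endpoints. This is a finite combinatorial case analysis whose spirit follows the style used in the proof of Proposition \ref{hello}.
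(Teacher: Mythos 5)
Your first step is sound and in the spirit of the paper: every arc of $D_i^2$ goes from a vertex to a strict $T$-ancestor inside $V_i$, so a maximum-level vertex $v$ of a subgraph of minimum degree at least $7$ has seven out-neighbours forming a chain $u_1 \leqslant_T \cdots \leqslant_T u_7 \leqslant_T v$, and any nontrivial tree path between two of these vertices has length at least $k$. The gap is everything after that. First, your ``second step'' is only asserted, not derived: $\delta(H)\geq 7$ does not force either of your alternatives in any stated way, since the large degree of each $u_j$ may be realized entirely by in-arcs from descendants scattered in disjoint subtrees, and the alternative ``the chain is long enough'' leads nowhere --- a single star from $v$ into a totally ordered chain can never close a six-block cycle, because the only candidate blocks out of a chain vertex are tree paths into the chain, and two tree paths from one chain vertex to two comparable targets are never internally disjoint. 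You flag this step yourself as the main obstacle, which means the core of the proof is missing. Second, the final assembly is structurally impossible as described: in a subdivision of $C(k,1,1,1,1,1)$ there are exactly three sources, each the tail of exactly two blocks. With one tree-path block of tail $u_a$ and five arc blocks whose tails lie in $\{v,v'\}$, the vertex $u_a$ is the tail of only one block and one of $v,v'$ is the tail of three, so no choice of indices can realize your pattern. Even granting a second vertex $v'$ with two arcs into the chain, a third source is still needed, and it must be a chain vertex that itself has an out-arc (allowing composite blocks such as an arc followed by a tree path); your structure never provides this.

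For comparison, the paper avoids criticality and minimum degree altogether. It splits $V(D_i^2)$ into $S_1$, the vertices of out-degree at most $1$, and $S_2$, the rest, and shows $\Delta^{+}(D_i^2[S_2])\leq 4$: if a vertex $x$ had three out-neighbours $y_2,y_3,y_4$ in $S_2$ among the interior of its out-neighbour chain, then out-neighbours $z_2$ of $y_2$ and $z_4$ of $y_4$ exist, and the forbidden subdivision is built from the arcs $(x,\cdot)$, $(y_2,z_2)$, $(y_4,z_4)$ together with tree paths, using precisely the composite blocks (arc concatenated with a tree path) that your construction lacks; here the extra sources are chain vertices with their own out-arcs, which is exactly what membership in $S_2$ guarantees. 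Since $D_i^2$ is acyclic, degeneracy gives $\chi(D_i^2[S_1])\leq 2$ and $\chi(D_i^2[S_2])\leq 5$, hence $\chi(D_i^2)\leq 7$. If you want to salvage your route, you would have to prove, not assume, that high minimum degree yields two chain vertices each with an out-arc positioned so that the six blocks can be matched; as written, the proposal does not establish the proposition.
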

 \begin{proof}
Partition $V(D_i^2)$ into two sets, $S_1:= \lbrace x \in V(D_i^2)$, such that $d_{D_i^2}^+(x) \leq 1 \rbrace$, and $S_2:= V(D_i^2) \backslash S_1 $. Now, we show that $\Delta^+ (D_i^2[S_2]) \leq 4$.  Let $x \in S_2$. By the definition of $D_i^2$, all the out neighbors of $x$ in $D_i^2$ are ancestors. Set $N^+_{D_i^2}(x)= \lbrace y_1,y_2, \dots, y_p \rbrace$, and without loss of generality, assume that $y_i \leqslant_T y_{i+1}$, for all $i \in \lbrace 1, 2, \dots , p-1 \rbrace$. Assume to the contrary that there exist at least $3$ vertices in $ \lbrace y_2, y_3 , \dots, y_{p-1} \rbrace$ that belong to $S_2$. Assume without loss of generality that $y_2$, $y_3$ and $y_4$ belong to these vertices. Let $z_2$ be an out-neighbor of $y_2$ in $D_i^2$. As $d^+_{D_i^2}(y_4) \geq 2$, then $y_4$ has an out neighbor $z_4 \in D_i^2$, such that $z_4 \neq y_2$. If $z_4 \in T_{y_2} \backslash \lbrace y_2 \rbrace$, then the union of  $T[y_2,z_4]$, $(y_4,z_4)$, $T[y_4,y_p]$, $(x,y_p)$, $(x,y_1) \cup T[y_1,z_2]$ and $(y_2,z_2)$ is a $S$-$C(k,1,1,1,1,1)$ whenever $z_2 \in T_{y_1}$, and the union of $T[y_2,z_4]$, $(y_4,z_4)$, $T[y_4,y_p]$, $(x,y_p)$, $(x,y_1)$ and $(y_2,z_2) \cup T[y_1,z_2] $  is a $S$-$C(k,1,1,1,1,1)$ whenever $z_2 \leqslant_T y_1$, a contradiction. So, $z_4 \leqslant_T y_2$. Then the union of $T[y_4,y_p]$, $(x,y_p)$, $(x,y_3)$, $T[y_2,y_3]$, $(y_2,z_2) \cup T[z_2,z_4]$  and $(y_4,z_4)$ is a $S$-$C(k,1,1,1,1,1)$ whenever $z_2 \leqslant_T x_4$, and the union of $T[y_4,y_p]$, $(x,y_p)$, $(x,y_3)$, $T[y_2,y_3]$, $(y_2,z_2)$ and $(y_4,z_4) \cup T[z_4,z_2]$ is a $S$-$C(k,1,1,1,1,1)$ whenever $z_4 \leqslant_T z_2$,  a contradiction. Hence $x$ has at most $2$ out neighbors in  $ \lbrace y_2, y_3 , \dots, y_{p-1} \rbrace$ that belong to $S_2$. Thus $x$ has at most $4$ out neighbors in $S_2$. Note that every acyclic digraph is $d$-degenerate, where $d$ is its maximal outdegree. As $D_i^2$ is acyclic, then $D_i^2[S_1]$ and $D_i^2[S_2]$ are acyclic. So Lemm \ref{degenerate} implies that  $\chi(D_i^2[S_1]) \leq \Delta^+(D_i^2[S_1]) +1 \leq 2$ and $\chi(D_i^2[S_2]) \leq \Delta^+(D_i^2[S_2]) +1 \leq 5$.  Therefore $\chi(D_i^2) \leq \chi(D_i^2[S_1]) + \chi(D_i^2[S_2]) \leq 7$.   This terminates the proof of  Proposition \ref{prop2}.$\hfill {\square}$
\end{proof}

 \subsection{The landscape of antidirected cycles in $D_{i}^{3}$ and coloring $D_i^3$}
 \begin{figure}[h]
	\centering
	\includegraphics[width=0.9\linewidth]{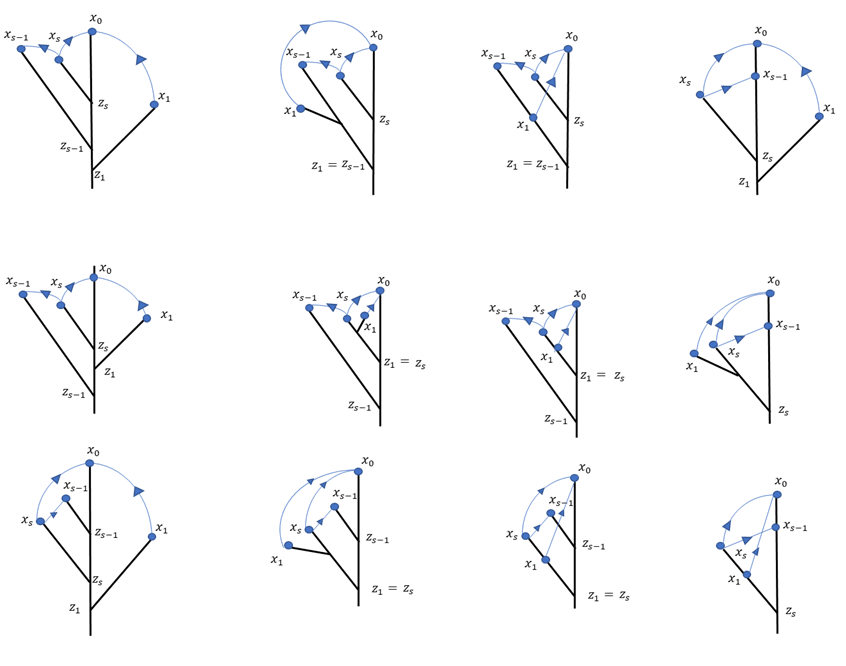}
	\caption{All possible positions of $z_1, z_{s-1}$, and $z_s$ on the tree $T$.}
	\label{fig:cases}
\end{figure} \vspace{3mm}

 In this section, we are going to show that $D_i^3$ contains no antidirected cycle of length at least $8$. To this end, we assume that the contrary holds. Let  $C:=x_0x_1 \dots x_s$ be an antidirected cycle  of length at least $8$ in $D_i^3$, where $x_0$ is a sink of $C$ of maximal level among the vertices of $C$, and let $z_i=\textrm{l.c.a}(x_i,x_0)$, for all $ i \in \lbrace 1,2, \dots ,s \rbrace $. We have $z_i \leqslant_T x_0$, for all $i \in \lbrace 1, \dots , s \rbrace $. Without loss of generality, assume that $z_1 \leqslant_T z_s$, and if $z_1= z_s$, then assume that $l_T(x_1) \leq l_T(x_s)$. We have  $z_i \leqslant_T z_j \leqslant_T z_k $, where $i$, $j$ and $k$ $\in \lbrace 1, s-1, s \rbrace$ (see Figure \ref{fig:cases}). In what follows, we study the case where $z_1 \leqslant_T z_{s-1} \leqslant_T z_s$, such that $z_1$, $z_{s-1}$ and $z_s$ are pairwisely distinct, and also we study the case where $z_1 = z_{s} $, such that $x_1 \leqslant_T x_s$ and $x_{s-1} \leqslant_T x_0$. The other cases are done analogously and we omit them.\\
In each of the studied cases, we proceed by studying the position of $x_2$ in order to find a $S$-$C(k,1,1,1,1,1)$ in $D$ and reach a contradiction, which confirms that  $D_i^3$ contains no antidirected cycle of length at least $8$ and so by Theorem \ref{antidi}, $ \chi (D_i^3) \leq 24$.

In what follows, we consider the case:  $z_1 \leqslant_T z_{s-1} \leqslant_T z_s $, where $z_1$, $z_{s-1}$ and $z_s$ are pairwisely distinct. We show first that  $x_2$ and  $x_{s-1}$ are not ancestors: In Lemma \ref{lem1} (respectively Lemma \ref{lem2}) (respectively Lemma \ref{lem3}), we prove that if $x_2 \in T_{x_{s-1}} $ (respectively $x_2 \in T]z_{s-1},x_{s-1}[ $) (respectively $x_2 \in T]z_1,z_{s-1}]$), then $D$ contains a $S$-$C(k,1,1,1,1,1)$.\\
\begin{figure}[h]
	\centering
	\includegraphics[width=0.75\linewidth]{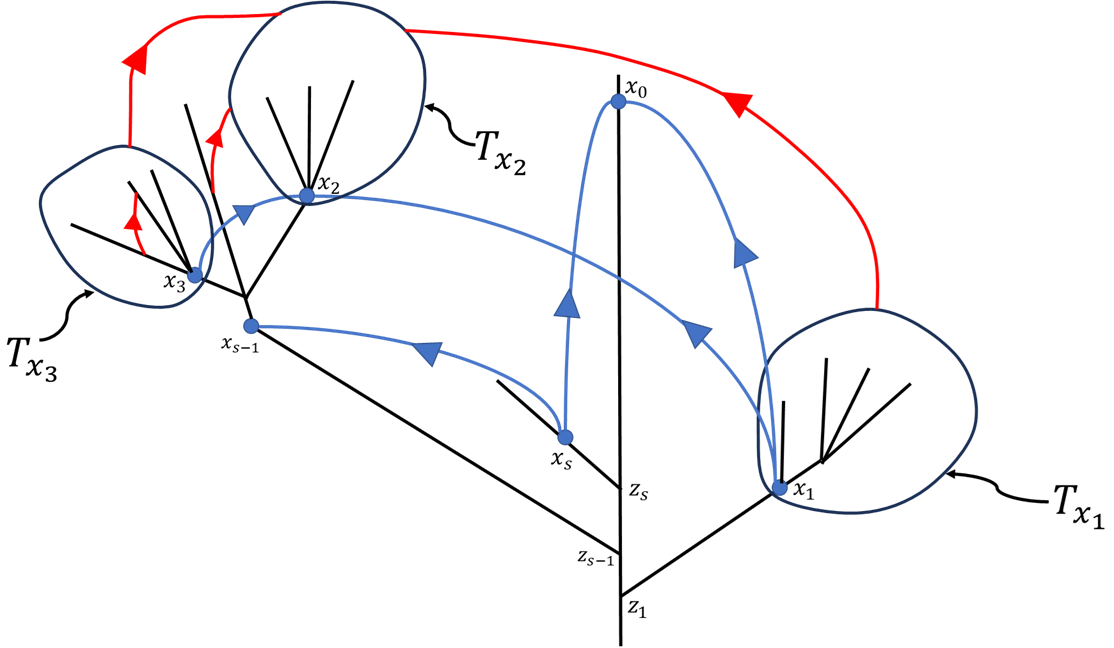}
	\caption{Illustration of Assertion \ref{assa} and Assertion \ref{ass1}.}
	\label{fig:possible arcs1}
\end{figure} \vspace{3mm}
 
 \begin{lemma}\label{lem1}
 Suppose that $D_i^3$ contains an antidirected cycle $C:=x_0x_1 \dots x_s$ of length at least $6$, where $x_0$ is a sink of $C$ of maximal level among the vertices of $C$, and let $z_i=\mathrm{l.c.a}(x_i,x_0)$, for all $ i \in \lbrace 1,2, \dots ,s \rbrace $, such that $z_1 \leqslant_T z_{s-1} \leqslant_T z_s $, where $z_1$, $z_{s-1}$ and $z_s$ are pairwisely distinct. Then $x_2 \notin T_{x_{s-1}} $.
\end{lemma}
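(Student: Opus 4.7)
The plan is to argue by contradiction: assume $x_2 \in T_{x_{s-1}}$ and exhibit a subdivision of $C(k,1,1,1,1,1)$ in $D$, which contradicts the standing hypothesis that $D$ contains no such subdivision.

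First I would exploit the assumed ancestor relation. Since $x_{s-1}$ and $x_2$ both lie in $V_i$ with $x_{s-1} \leqslant_T x_2$ and $x_{s-1} \neq x_2$, the level difference $l_T(x_2) - l_T(x_{s-1})$ is a positive multiple of $k$, so $T[x_{s-1},x_2]$ is a directed path in $D$ of length at least $k$. This tree path will serve as the long block (or will be absorbed into a slightly longer forward block) of the target $S$-$C(k,1,1,1,1,1)$.

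Next I would look for five short directed paths of length at least $1$ that, together with the long block, close into a six-block antidirected cycle whose blocks alternate in direction. Consecutive blocks must meet at corners that are alternately sources and sinks of the subdivision cycle. The natural candidates are the arcs of $C$ near $x_0$, $x_1$, $x_s$, namely $(x_1,x_2)$, $(x_1,x_0)$, $(x_s,x_0)$, $(x_s,x_{s-1})$, together with one additional arc on the other side of $C$ such as $(x_{s-2},x_{s-1})$ or $(x_3,x_2)$. In order to obtain exactly six blocks (rather than four or five), I would let the long block absorb one adjacent cycle arc, for instance by taking $(x_{s-2},x_{s-1}) \cup T[x_{s-1},x_2]$ as a directed path of length at least $k+1$, and then assemble the remaining five length-$1$ blocks from arcs along the $x_{s-2}, x_{s-3},\ldots,x_3,x_2$ side of $C$, or through $x_0, x_1, x_s$, depending on the length of $C$ and on what is afforded by Assertion~\ref{assa} and Assertion~\ref{ass1} (illustrated in Figure~\ref{fig:possible arcs1}).

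Once the six blocks are chosen, I would verify internal disjointness: because $T$ is final and the arcs of $D_i^3$ are strictly cross arcs, the tree path $T[x_{s-1},x_2]$ meets $C$ only at its endpoints, and the selected cycle arcs are pairwise arc-disjoint by construction. Concatenating the six directed paths then produces an oriented cycle with one forward block of length $\geq k$ and five length-$1$ blocks of alternating direction, that is, a subdivision of $C(k,1,1,1,1,1)$ in $D$, giving the sought contradiction. I expect the main obstacle to be controlling the block count: naively stitching the tree path with the cycle arcs around $x_0, x_1, x_s$ causes it to merge with a neighbouring forward cycle arc into a single long block, so one obtains only four or five blocks instead of six. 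Circumventing this forces a case analysis on the length of $C$ and on the relative tree positions of $x_1, x_{s-1}, x_s, x_{s-2}$ guided by the assertions mentioned above, and that case analysis should constitute the bulk of the proof.
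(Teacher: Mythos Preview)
Your opening moves are right: argue by contradiction, and note that $x_{s-1}\leqslant_T x_2$ forces $l(T[x_{s-1},x_2])\geq k$. You also correctly spot the real difficulty, namely that gluing $T[x_{s-1},x_2]$ to the obvious arcs $(x_s,x_{s-1})$, $(x_s,x_0)$, $(x_1,x_0)$, $(x_1,x_2)$ collapses to only four blocks. But your proposed fix---grab ``one additional arc on the other side of $C$ such as $(x_{s-2},x_{s-1})$ or $(x_3,x_2)$''---does not work as stated, because at this point you know nothing about where $x_3$ or $x_{s-2}$ sit in $T$. They need not lie in $T_{x_1}$, $T_{x_s}$, or $T_{x_{s-1}}$; they could be in an entirely unrelated subtree, and then no six-block cycle through $x_0,x_1,x_s$ is visible. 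Your claim that ``$T[x_{s-1},x_2]$ meets $C$ only at its endpoints'' is likewise unjustified and in general false: intermediate cycle vertices can and do land on that tree path.

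What the paper actually does is not a direct construction from a handful of nearby arcs but a containment argument. It first proves the Assertions you cite (and two more), which say that every $D_i^3$-arc touching $T_{x_2}$, $T_{x_3}$, or $T_{x_{s-1}}$ has its other end confined to $T_{x_1}\cup T_{x_{s-1}}$ (respectively $T_{x_2}\cup T_{x_3}$). These are not side remarks; each requires its own multi-case search for an $S$-$C(k,1,1,1,1,1)$. With the Assertions in hand, one traces $C$ forward from $x_2$: Assertion~\ref{assa} forces $x_3\in T_{x_1}\cup T_{x_{s-1}}$, and then an induction (the four Cases in the paper) shows that the first index $i$ with $x_i\in T_{x_2}\cup T_{x_3}$ and $x_{i+1}\notin T_{x_2}\cup T_{x_3}$ already yields an $S$-$C(k,1,1,1,1,1)$. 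Hence every $x_j$ with $2\leq j\leq s-2$ stays in $T_{x_2}\cup T_{x_3}\subseteq T_{x_{s-1}}$; but then $x_{s-2}\in T_{x_{s-1}}$ makes $(x_{s-2},x_{s-1})$ an arc between comparable vertices, contradicting the definition of $A_3$ (and in the branch $x_3\in T_{x_1}$ one instead builds the six-block cycle from $T[x_3,x_{s-2}]$). So the ``case analysis'' you anticipate is not on the fixed vertices $x_1,x_{s-1},x_s$ but on the moving vertex $x_i$ as it tries to leave $T_{x_2}\cup T_{x_3}$, and the contradiction in one branch is structural (an $A_3$-arc between ancestors) rather than the exhibition of an $S$-$C(k,1,1,1,1,1)$.
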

\begin{proof} Assume to the contrary that $x_2 \in T_{x_{s-1}}$.
 \begin{assertion}\label{assa}
 For all $ y \in T_{x_2}$, $N^-_{D_i^3}(y) \subseteq T_{x_1} \cup T_{x_{s-1}}$.
 \end{assertion}
 \begin{proof} 
  Assume otherwise. Suppose that there exists $ y \in T_{x_2}$, such that $(x,y) \in A({D_i^3})$ and $x \notin T_{x_1} \cup T_{x_{s-1}}$. As $ x\notin T_{x_{s-1}}$, then $x$ and $x_{s-1}$ are not ancestors. Let $z=\textrm{l.c.a}(x,x_{s-1})$. If $z \leqslant _T z_{s-1}$, then the union of $T[z,x_{s-1}]$, $(x_s,x_{s-1})$, $(x_s,x_0)$, $(x_1,x_0)$, $(x_1,x_2) \cup T[x_2,y]$ and $T[z,x] \cup (x,y)$ is a $S$-$C(k,1,1,1,1,1)$ whenever $x \notin T_{x_s}$, and the union of $T[x_s,x] \cup (x,y)$, $(x_1,x_2) \cup T[x_2,y]$,  $(x_1,x_0)$, $T[z_{s-1},x_0]$, $T[z_{s-1},x_{s-1}]$ and $(x_s,x_{s-1})$ is a $S$-$C(k,1,1,1,1,1)$ whenever $x \in T_{x_s}$, a contradiction. If $z_{s-1} \leqslant_T z$ and $ z_{s-1} \neq z$, then the union of $T[z_s,x_0]$, $(x_1,x_0)$, $(x_1,x_2) \cup  T[x_2,y]$, $T[z,x] \cup (x,y)$, $T[z,x_{s-1}]$ and $T[z_s,x_s] \cup (x_s,x_{s-1})$ is a $S$-$C(k,1,1,1,1,1)$, a contradiction. This terminates the proof of this assertion. $\hfill {\lozenge}$
  \end{proof}\vspace{3mm}\\
  Due to Assertion \ref{assa}, $x_3 \in T_{x_1} \cup T_{x_{s-1}}$. Assume first that $x_3 \in T_{x_{s-1}}$.
\begin{assertion}\label{ass1} 
 For all $x\in T_{x_3}$, $N^{+}_{D_i^3}(x) \subseteq T_{x_2} \cup T_{x_3} $.
 \end{assertion}
 \begin{proof}
 Suppose to the contrary that there exists $ x\in T_{x_3}$, such that $(x,y) \in A({D_i^3})$ and $y \notin T_{x_2} \cup T_{x_3}$. If $y \leqslant_T x_0$, then the union of $(x_s,x_{s-1})\cup T[x_{s-1},x_2]$, $(x_3,x_2)$, $T[x_3,x] \cup (x,y)$, $T[z_1,y]$, $T[z_1,x_1] \cup (x_1,x_0)$ and $(x_s,x_0)$ is a $S$-$C(k,1,1,1,1,1)$, a contradiction. So $y$ is not an ancestor of $x_0$. Let $z=\textrm{l.c.a}(x_0,y)$. Then the union of  $(x_s,x_{s-1})\cup T[x_{s-1},x_2]$, $(x_3,x_2)$, $T[x_3,x] \cup (x,y)$, $T[z,y]$, $T[z,x_0]$ and $(x_s,x_0)$ is a $S$-$C(k,1,1,1,1,1)$ whenever $y\notin T_{x_s} \cup T_{x_{s-1}}$, and the union of $T[x_s,y]$, $T[x_3,x]\cup (x,y)$, $(x_3,x_2)$, $(x_1,x_2)$, $(x_1,x_0)$ and $(x_s,x_0)$ is a $S$-$C(k,1,1,1,1,1)$ whenever $y \in T_{x_s}$, and $(x_s,x_{s-1}) \cup T[x_{s-1},y]$, $T[x_3,x]\cup(x,y)$, $(x_3,x_2)$, $(x_1,x_2)$, $(x_1,x_0)$ and $(x_s,x_0)$ is a $S$-$C(k,1,1,1,1,1)$ whenever $y \in T_{x_{s-1}} \backslash ( T_{x_2} \cup T_{x_3})$, a contradiction.   This implies the desired result.  $\hfill {\lozenge}$
\end{proof} \vspace{3mm}\\
Due to Assertion \ref{ass1} and the definition of $D_i^3$, we have $x_4 \in T_{x_2}$.

\begin{assertion}\label{newassertion1}
 For all $ x \in T_{x_{s-1}}$, $N^{+}_{D_i^3}(x) \subseteq T_{x_{s-1}} \cup T_{x_1}$.
 \end{assertion} 
  \begin{proof}
Assume to the contrary that there exists an arc $(x,y) \in A({D_i^3})$, such that $x \in T_{x_{s-1}}$  and $y \notin T_{x_1} \cup T_{x_{s-1}}$. We proceed according to the position of $x$.\\
First, suppose that $x \in T_{x_2}$. Due to the definition of $D_i^3$, $y$ and $x_{s-1}$ are not ancestors. Let $z=\textrm{l.c.a}(x_{s-1},y)$, then union of $T[z,y]$, $(x_1,x_2) \cup T[x_2,x] \cup (x,y)$, $(x_1,x_0)$, $(x_s,x_0)$, $(x_s,x_{s-1})$ and $T[z,x_{s-1}]$ forms a $S$-$C(k,1,1,1,1,1)$ whenever $y \notin T_{x_s}$, and $T[x_s,y]$, $(x_1,x_2) \cup T[x_2,x] \cup (x,y)$, $(x_1,x_0)$, $T[z_{s-1},x_0]$, $T[z_{s-1},x_{s-1}]$ and $(x_s,x_{s-1})$ form a $S$-$C(k,1,1,1,1,1)$ whenever $y \in T_{x_s}$, a contradiction.  So, $x \notin T_{x_2}$.\\
Now, suppose that $x \in T]x_{s-1},x_2[$. Then, y is not ancestor of $x_0$, since elsewhere, the union of $T[z_s,y]$, $(x,y)$, $T[x,x_2]$, $(x_1,x_2)$, $(x_1,x_0)$ and $T[z_s,x_s] \cup (x_s,x_0)$ forms a $S$-$C(k,1,1,1,1,1)$, a contradiction. Let $z=\textrm{l.c.a}(x_0,y)$, then $T[x,x_2]$, $(x_1,x_2)$, $(x_1,x_0)$, $T[z,x_0]$, $T[z,y]$ and $(x,y)$ form a $S$-$C(k,1,1,1,1,1)$, a contradiction.\\
Thus, $x \in T_{x_{s-1}} \backslash (T_{x_2} \cup T[x_{s-1},x_2])$. Let $z'=\textrm{l.c.a}(x_2,x)$. If $y \in T_{x_s}$, then $T[x_s,y]$, $T[z',x] \cup (x,y)$, $T[z',x_2]$, $(x_1,x_2)$, $(x_1,x_0)$ and $(x_s,x_0)$ form a $S$-$C(k,1,1,1,1,1)$, a contradiction. So, $y$ and $x_s$ are not ancestors. Let $z=\textrm{l.c.a}( x_s,y)$, then $T[z,y]$, $T[z',x] \cup (x,y)$, $T[z',x_2]$, $(x_1,x_2)$, $(x_1,x_0)$ and $T[z,x_s] \cup (x_s,x_0)$ form a $S$-$C(k,1,1,1,1,1)$, a contradiction. This terminates the proof of this assertion.$\hfill {\lozenge}$ 
 \end{proof} 
\begin{assertion}\label{newassertion2}
For all $ y \in T_{x_{s-1}}$, $N^{-}_{D_i^3}(y) \subseteq T_{x_{s-1}} \cup T_{x_1}$. 
 \end{assertion}
 \begin{proof} Assume, to the contrary, that there exists an arc $(x,y) \in A(D_i^3)$, such that $y \in T_{x_{s-1}}$ and $x \notin T_{x_1} \cup T_{x_{s-1}}$. By Assertion \ref{assa}, $y \notin T_{x_2}$. If $y \leqslant_T x_2$, then $x \notin T_{x_s}$, otherwise the union of $T[x_s,x] \cup (x,y) \cup T[y,x_2]$, $(x_1,x_2)$, $(x_1,x_0)$,  $T[z_{s-1},x_0]$, $T[z_{s-1},x_{s-1}]$ and $(x_s,x_{s-1})$ forms a $S$-$C(k,1,1,1,1,1)$, a contradiction. Let $z=\textrm{l.c.a}(x_{s-1},x)$, then  $T[z,x] \cup (x,y) \cup T[y,x_2]$, $(x_1,x_2)$, $(x_1,x_0)$, $(x_s,x_0)$, $(x_s,x_{s-1})$ and $T[z,x_{s-1}]$  forms a $S$-$C(k,1,1,1,1,1)$, a contradiction. So, $y$ is not an ancestor of $x_2$.\\
 Let $z'=\textrm{l.c.a}(x_2,y)$, then $x$ is not ancestor of $x_0$, otherwise the union of $T[x,x_0]$, $(x_1,x_0)$, $(x_1,x_2)$, $T[z',x_2]$, $T[z',y]$ and $(x,y)$  forms a $S$-$C(k,1,1,1,1,1)$, a contradiction. Let $z=\textrm{l.c.a}(x_0,x)$, then $T[z,x_0]$, $(x_1,x_0)$, $(x_1,x_2)$, $T[z',x_2]$, $T[z'y]$ and  $T[z,x] \cup (x,y)$  forms a $S$-$C(k,1,1,1,1,1)$, a contradiction.$\hfill {\lozenge}$
 \end{proof}\\
 
 Let $i$ be the smallest integer, such that $x_i \in T_{x_2} \cup T_{x_3}$ and $x_{i+1} \notin T_{x_2} \cup T_{x_3}$,  $i \in \lbrace 4,5 \dots s-2 \rbrace$. Due to Assertion \ref{newassertion1} and Assertion \ref{newassertion2}, we have $x_{i+1} \in T_{x_{s-1}} \cup T_{x_1}$. We discuss according to the position of $x_i$.\\

 $\textbf{Case 1:}$ If $x_i \in T_{x_3}$ and $x_i$ is a source of $C$.\\
 If $x_{i+1} \in T_{x_{s-1}} \backslash (T_{x_2} \cup T_{x_3})$, then the union of $(x_s,x_{s-1}) \cup T[x_{s-1},x_{i+1}]$, $T[x_3,x_i] \cup (x_i,x_{i+1})$, $(x_3,x_2)$, $(x_1,x_2)$, $(x_1,x_0)$ and $(x_s,x_0)$  forms a $S$-$C(k,1,1,1,1,1)$, a contradiction. So, $x_{i+1} \in T_{x_1}$. Thus, $T[x_1,x_{i+1}]$, $T[x_3,x_i] \cup (x_i,x_{i+1})$, $(x_3,x_2)$, $(x_s,x_{s-1}) \cup T[x_{s-1},x_2]$, $(x_s,x_0)$ and $(x_1,x_0)$  form a $S$-$C(k,1,1,1,1,1)$, a contradiction. So, this case is rejected.\\
 
 $\textbf{Case 2:}$ If $x_i \in T_{x_3}$ and $x_i$ is a sink of $C$.\\
 If $x_{i+1} \in T_{x_{s-1}} \backslash ( T_{x_2} \cup T_{x_3})$, then the union of $(x_s,x_{s-1}) \cup T[x_{s-1},x_{i+1}] \cup (x_{i+1},x_i)$, $T[x_3,x_i]$, $(x_3,x_2)$, $(x_1,x_2)$, $(x_1,x_0)$ and $(x_s,x_0)$ forms a $S$-$C(k,1,1,1,1,1)$, a contradiction. So, $x_{i+1} \in T_{x_1}$. Thus, $(x_s,x_{s-1}) \cup T[x_{s-1},x_2]$, $(x_3,x_2)$, $T[x_3,x_i]$, $T[x_1,x_{i+1}] \cup (x_{i+1},x_i)$, $(x_1,x_0)$ and $(x_s,x_0)$ form a $S$-$C(k,1,1,1,1,1)$, a contradiction.\\
 
 $\textbf{Case 3:}$ If $x_i \in T_{x_2}$ and $x_i$ is a source of $C$.\\
 Due to the minimality of $i$, $x_{i-1} \in T_{x_2} \cup  T_{x_3}$. Assume first that $x_{i-1} \in T_{x_2}$, then the union of $(x_1,x_2) \cup T[x_2,x_{i-1}]$, $(x_i,x_{i-1})$, $(x_i,x_{i+1})$, $(x_s,x_{s-1}) \cup T[x_{s-1},x_{i+1}]$, $(x_s,x_0)$ and $(x_1,x_0)$ forms a $S$-$C(k,1,1,1,1,1)$ whenever $x_{i+1} \in T_{x_{s-1}} \backslash (T_{x_2} \cup T_{x_3})$, and $T[x_1,x_{i+1}]$, $(x_i,x_{i+1})$, $(x_i,x_{i-1})$, $(x_s,x_{s-1}) \cup T[x_{s-1},x_{i-1}]$, $(x_s,x_0)$ and $(x_1,x_0)$ form a $S$-$C(k,1,1,1,1,1)$ whenever $x_{i+1} \in T_{x_1}$, a contradiction.\\
 Now, assume that $x_{i-1} \in T_{x_3}$, then the union of $T[x_3,x_{i-1}]$, $(x_i,x_{i-1})$, $(x_i,x_{i+1})$, $T[z_1,x_{i+1}]$, $T[z_1,x_1] \cup (x_1,x_2)$ and $(x_3,x_2)$    forms a $S$-$C(k,1,1,1,1,1)$ whenever $x_{i+1} \in T_{x_{s-1}} \backslash (T_{x_2} \cup T_{x_3})$, and $T[x_1,x_{i+1}]$, $(x_i,x_{i+1})$, $(x_i,x_{i-1})$, $(x_s,x_{s-1}) \cup T[x_{s-1}, x_{i-1}]$, $(x_s,x_0)$ and $(x_1,x_0)$ form a $S$-$C(k,1,1,1,1,1)$ whenever $x_{i+1} \in T_{x_1}$, a contradiction.\\
 
  $\textbf{Case 4:}$ If $x_i \in T_{x_2}$ and $x_i$ is a sink of $C$.\\
   Due to the minimality of $i$, $x_{i-1} \in T_{x_2} \cup  T_{x_3}$ and $x_{i-2} \in T_{x_2} \cup T_{x_3}$. If $x_{i-1} \in T_{x_3}$, then the union of $(x_s,x_{s-1}) \cup T[x_{s-1},x_{i+1}] \cup (x_{i+1},x_i)$, $T[x_3,x_{i-1}] \cup(x_{i-1},x_i)$, $(x_3,x_2)$, $(x_1,x_2)$, $(x_1,x_0)$ and $(x_s,x_0)$ forms a $S$-$C(k,1,1,1,1,1)$ whenever $x_{i+1} \in T_{x_{s-1}} \backslash (T_{x_2} \cup T_{x_3})$, and $T[x_1,x_{i+1}] \cup (x_{i+1},x_i)$, $T[x_3,x_{i-1}] \cup(x_{i-1},x_i)$, $(x_3,x_2)$, $(x_s,x_{s-1}) \cup  T[x_{s-1},x_2]$, $(x_s,x_0)$ and $(x_1,x_0)$   form a $S$-$C(k,1,1,1,1,1)$ whenever $x_{i+1} \in T_{x_1}$, a contradiction. So, $x_{i-1} \in T_{x_2}$.\\
   If $x_{i+1} \in T_{x_1}$, then $T[x_1,x_{i+1}] \cup (x_{i+1},x_i)$, $T[z,x_{i-1}] \cup (x_{i-1},x_i)$, $T[z,x_4]$, $(x_s,x_{s-1}) \cup T[x_{s-1},x_3] \cup (x_3,x_4)$, $(x_s,x_0)$ and $(x_1,x_0)$ form a $S$-$C(k,1,1,1,1,1)$ whenever $x_{i-1} \in T_{x_2} \backslash T_{x_4}$ where $z= \textrm{l.c.a}(x_4,x_{i-1})$, and $T[x_1,x_{i+1}] \cup (x_{i+1},x_i)$, $(x_{i-1},x_i)$, $T[x_{i-1},x_4]$, $(x_3,x_4)$, $(x_3,x_2)$ and $(x_1,x_2)$ form a $S$-$C(k,1,1,1,1,1)$ whenever $x_{i-1} \in T]x_2,x_4[$, and $T[x_1,x_{i+1}] \cup (x_i,x_{i+1})$, $(x_3,x_4) \cup T[x_4,x_{i-1}] \cup (x_{i-1},x_i)$, $(x_3,x_2)$, $(x_s,x_{s-1}) \cup T[x_{s-1},x_2]$, $(x_s,x_0)$ and $(x_1,x_0)$ form a $S$-$C(k,1,1,1,1,1)$ whenever $x_{i-1} \in T_{x_4}$, a contradiction. \\
   So, $x_{i+1} \in T_{x_{s-1}} \backslash (T_{x_2} \cup T_{x_3})$. If $x_{i-2} \in T_{x_2}$, then the union of $(x_1,x_2) \cup T[x_2,x_{i-2}]$, $(x_{i-1},x_{i-2})$, $(x_{i-1},x_i)$, $(x_s,x_{s-1}) \cup T[x_{s-1},x_{i+1}] \cup (x_{i+1},x_i)$, $(x_s,x_0)$ and $(x_1,x_0)$ forms a $S$-$C(k,1,1,1,1,1)$, a contradiction. If $x_{i-2} \in T_{x_3}$, then the union of $(x_s,x_{s-1}) \cup T[x_{s-1},x_{i-2}]$, $(x_{i-1},x_{i-2})$, $(x_{i-1},x_i)$, $(x_1,x_2) \cup T[x_2,x_i]$, $(x_1,x_0)$ and $(x_s,x_0)$ form a $S$-$C(k,1,1,1,1,1)$, a contradiction.\\

    Therefore, by the previous assertions, for all  $x \in V(C) \backslash \lbrace x_0,x_1,x_s,x_{s-1} \rbrace $, $x \in T_{x_2} \cup T_{x_3}$. Particularly, $x_{s-2} \in T_{x_2} \cup T_{x_3} \subseteq T_{x_{s-1}}$ , which contradicts the definition of $D_i^3$. Hence $x_3 \in T_{x_1}$. And by similar proof to the previous arguments, we obtain that $x_i \in T_{x_2} \cup T_{x_3}$, for all $i \in \lbrace 2,3,\dots,s-2 \rbrace $.  And due to the definition of $D_i^3$, we get that $x_{s-2} \in T_{x_3}$. In this case, the union of $T[x_3,x_{s-2}] \cup (x_{s-2},x_{s-1})$, $(x_{s},x_{s-1})$, $(x_s,x_0)$, $(x_1,x_0)$, $(x_1,x_2)$ and $(x_3,x_2)$ is a $S$-$C(k,1,1,1,1,1)$, a contradiction.  This terminates the proof of Lemma \ref{lem1}.$\hfill {\square}$
    \end{proof}
   
    \begin{lemma}\label{lem2}
 Suppose that $D_i^3$ contains an antidirected cycle $C:=x_0x_1 \dots x_s$ of length at least $6$, where $x_0$ is a sink of $C$ of maximal level, and let $z_i=\mathrm{l.c.a}(x_i,x_0)$, for all $ i \in \lbrace 1,2, \dots ,s \rbrace $, such that $z_1 \leqslant_T z_{s-1} \leqslant_T z_s $ and $z_1$, $z_{s-1}$ and $z_s$ are pairwisely distinct. Then $x_2 \notin T]z_{s-1},x_{s-1}[ $. 
\end{lemma}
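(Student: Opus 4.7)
I would assume, for contradiction, that $x_2\in T]z_{s-1},x_{s-1}[$. Since $x_2$ and $x_{s-1}$ both lie in $V_i$ (whose levels are $i,i+k,i+2k,\dots$) and $x_2<_T x_{s-1}$, the tree-path $T[x_2,x_{s-1}]$ has length equal to a positive multiple of $k$, hence at least $k$. This path is the natural candidate for the long block of every $S$-$C(k,1,1,1,1,1)$ I build below.

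The plan is to imitate the scheme of Lemma~\ref{lem1}, adapted to the new position of $x_2$. First I would prove a chain of assertions analogous to Assertions~\ref{assa}--\ref{newassertion2}, confining the in- and out-neighbours (in $D_i^3$) of every vertex of $T_{x_2}$ to a restricted zone of the form $T_{x_1}\cup T_{x_2}$. Each assertion is proved by assuming an arc $(x,y)$ that violates it and producing an explicit $S$-$C(k,1,1,1,1,1)$ in $D$ out of: the long tree-path $T[x_2,x_{s-1}]$ (or a variant containing it), the short $C$-arcs $(x_s,x_{s-1})$, $(x_s,x_0)$, $(x_1,x_0)$, $(x_1,x_2)$, and auxiliary tree-paths issuing from the common ancestors $z_1,z_{s-1},z_s$ and from $\mathrm{l.c.a}(x,\cdot)$ with a suitable cycle-vertex when the offending arc has its tail outside these subtrees. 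In every construction the length-$k$ block arises from a tree-path between two vertices of $V_i$ in the ancestor relation, whose length is automatically a positive multiple of $k$.

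With these restrictions in place, I would propagate them along $C$ via the smallest-index argument used at the end of Lemma~\ref{lem1}: let $j$ be the smallest index with $x_j\in T_{x_2}\cup T_{x_3}$ and $x_{j+1}\notin T_{x_2}\cup T_{x_3}$, and distinguish the four subcases according to whether $x_j$ is a source or a sink of $C$ and whether $x_j\in T_{x_2}$ or $T_{x_3}$. In each subcase the assumed configuration of $x_j$ and $x_{j+1}$ yields a $S$-$C(k,1,1,1,1,1)$ in $D$; ruling out all four subcases forces every internal vertex of $C$ to lie in $T_{x_2}\cup T_{x_3}$. In particular $x_{s-2}$ ends up in this zone and so stands in the ancestor relation with $x_{s-1}$, contradicting $(x_{s-2},x_{s-1})\in A(D_i^3)$.

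The chief difficulty I anticipate is the internal-disjointness bookkeeping: because $x_2$ now lies strictly between $z_{s-1}$ and $x_{s-1}$ on the tree (rather than inside $T_{x_{s-1}}$ as in Lemma~\ref{lem1}), many of the constructed subdivisions must split the tree-path $T[z_{s-1},x_{s-1}]$ at $x_2$ and assign the two pieces $T[z_{s-1},x_2]$ and $T[x_2,x_{s-1}]$ to different blocks of the would-be subdivision. Checking in every subcase that the chosen tree-paths and short arcs remain pairwise internally disjoint is what makes the proof long, although no new structural idea beyond Lemma~\ref{lem1} is needed.
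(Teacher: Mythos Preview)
Your plan has a structural gap stemming from the reversed geometry. In Lemma~\ref{lem1} the hypothesis $x_2\in T_{x_{s-1}}$ makes $T_{x_2}$ the \emph{deep} subtree, sitting below $x_{s-1}$; that is why arcs touching $T_{x_2}$ can be confined to $T_{x_1}\cup T_{x_{s-1}}$. In the present lemma the hypothesis $x_2\in T]z_{s-1},x_{s-1}[$ gives $x_2<_T x_{s-1}$, so now $x_{s-1}\in T_{x_2}$ and $T_{x_2}$ is the \emph{large} subtree. Your very first assertion, that in-neighbours of vertices of $T_{x_2}$ lie in $T_{x_1}\cup T_{x_2}$, is therefore false on its face: the cycle arc $(x_s,x_{s-1})$ has head $x_{s-1}\in T_{x_2}$ while $x_s\notin T_{x_1}\cup T_{x_2}$ (since $z_{s-1}<_T z_s$ forces $x_s$ into the branch toward $x_0$ above $z_{s-1}$, and $z_1<_T z_s$ rules out $x_s\in T_{x_1}$). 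With that assertion gone, the whole forward propagation through $x_3,x_4,\dots$ and the smallest-index argument no longer get off the ground; in particular there is no reason to expect $x_3$ to land in any controlled zone.

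The paper handles this by reversing the direction of the analysis to match the reversed geometry: since $x_{s-1}$ is now the deep vertex, it is $x_{s-2}$ (and then $x_{s-3}$) whose position is constrained. The proof first invokes Lemma~\ref{lem1}-style reasoning to exclude $x_{s-2}\in T_{x_2}$ (and likewise $T_{x_s}$), then by explicit $S$-$C(k,1,1,1,1,1)$ constructions rules out $x_{s-2}\in T_{x_1}$ and $x_{s-2}\leqslant_T x_1$, pinning $x_{s-2}$ into $T_{z_{s-1}}\setminus(T_{x_s}\cup T_{x_2})$. A further case analysis on the position of $x_{s-3}$ (and on whether $x_{s-2}\leqslant_T x_0$) then produces the contradiction. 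So the fix is not merely bookkeeping: you must work from the $x_{s-1}$ end of the cycle, not the $x_2$ end.
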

    \begin{proof}
   Assume that the contrary holds. By similar proof to Lemma \ref{lem1}, we get that if  $x_{s-2} \in T_{x_2}$, then $V(C) \backslash \lbrace x_0,x_1,x_2,x_s \rbrace \subseteq T_{x_{s-1}} \cup T_{x_{s-2}}$, in particular $x_3 \in T_{x_{s-1}} \cup T_{x_{s-2}} \subseteq T_{x_2}$, which contradicts the definition of $D_i^3$, so $x_{s-2} \notin T_{x_2}$. Similarly, $x_{s-2} \notin T_{x_s}$. Also, we note that $x_{s-2} \notin T_{x_1}$, since if otherwise, the union of $T[x_1,x_{s-2}] \cup (x_{s-2},x_{s-1})$, $(x_s,x_{s-1})$, $(x_s,x_0)$, $T[z_{s-1},x_0]$, $T[z_{s-1},x_2]$ and $(x_1,x_2)$ is a $S$-$C(k,1,1,1,1,1)$, a contradiction. Also, $x_{s-2}$ is not an ancestor of $x_1$, since elsewhere the union of $T[z_1,x_2]$, $(x_1,x_2)$, $(x_1,x_0)$, $(x_s,x_0)$, $(x_s,x_{s-1})$ and $T[z_1,x_{s-2}] \cup (x_{s-2},x_{s-1})$ forms a $S$-$C(k,1,1,1,1,1)$, a contradiction. So, $x_{s-2}$ and $x_1$ are not ancestors.  Let $z=\textrm{l.c.a}(x_1,x_{s-2})$, then the union of $T[z_{s-1},x_0]$, $(x_s,x_0)$, $(x_s,x_{s-1})$, $T[z,x_{s-2}] \cup (x_{s-2},x_{s-1})$, $T[z,x_1] \cup (x_1,x_2)$ and $T[z_{s-1},x_2]$  forms a $S$-$C(k,1,1,1,1,1)$ whenever $x_{s-2} \notin T_{z_{s-1}} $. Thus, $x_{s-2} \in T_{z_{s-1}} \backslash (T_{x_s} \cup T_{x_2})$. Now, we proceed according to the position of $x_{s-3}$. The union of  $(x_s,x_{s-1}) \cup  T[x_{s-1},x_{s-3}]$, $T[z',x_{s-2}] \cup (x_{s-2},x_{s-3})$, $T[z',x_2]$, $(x_1,x_2)$, $(x_1,x_0)$ and $(x_s,x_0)$ is a $S$-$C(k,1,1,1,1,1)$ whenever $x_{s-3} \in T_{x_{s-1}}$, where $z'=\textrm{l.c.a}(x_{s-2},x_2)$, and $T[x_s,x_{s-3}]$, $(x_{s-2},x_{s-3})$, $(x_{s-2},x_{s-1})$, $(x_1,x_2) \cup  T[x_2,x_{s-1}]$, $(x_1,x_0)$ and $(x_s,x_0)$ is a $S$-$C(k,1,1,1,1,1)$ whenever $x_{s-3} \in T_{x_s}$, and $T[x_1,x_{s-3}]$, $(x_{s-2},x_{s-3})$, $(x_{s-2},x_{s-1})$, $(x_s,x_{s-1})$, $(x_s,x_0)$ and $(x_1,x_0)$ forms a $S$-$C(k,1,1,1,1,1)$ whenever $x_{s-3} \in T_{x_1}$, a contradiction. So, $x_{s-3} \notin T_{x_1} \cup T_{x_s} \cup  T_{x_{s-1}}$. Also $x_{s-3}$ is not an ancestor of $x_1$, since elsewhere, the union of $T[z_1,x_0]$, $(x_s,x_0)$, $(x_s,x_{s-1})$, $(x_{s-2},x_{s-1})$, $(x_{s-2},x_{s-3})$ and $T[z_1,x_{s-3}]$ forms a $S$-$C(k,1,1,1,1,1)$ whenever $x_{s-2}$ is not an ancestor of $x_0$, and $T[x_{s-2},x_0]$, $(x_s,x_0)$, $(x_s,x_{s-1})$, $T[z_1,x_{s-1}]$, $T[z_1,x_{s-3}]$ and $(x_{s-2},x_{s-3})$ forms a $S$-$C(k,1,1,1,1,1)$ whenever $x_{s-2} \leqslant_T x_0$, a contradiction. Hence $x_{s-3}$ and $x_1$ are not ancestors. Let $z'=\textrm{l.c.a}(x_{s-3},x_1)$, then $T[z',x_{s-3}] $, $(x_{s-2},x_{s-3})$, $(x_{s-2},x_{s-1})$, $(x_s,x_{s-1})$, $(x_s,x_0)$ and $T[z',x_1]\cup (x_1,x_0)$ forms a $S$-$C(k,1,1,1,1,1)$ whenever $l(T[z',x_{s-3}]) \geq k$, a contradiction. So, $l(T[z',x_{s-3}]) <k$, and $z_1 \leqslant_T z'$ and $z_1 \neq z'$. Then, $(x_1,x_2) \cup T[x_2,x_{s-1}]$, $(x_{s-2},x_{s-1})$, $(x_{s-2},x_{s-3})$, $T[z_1,x_{s-3}]$, $T[z_1,x_0]$ and $(x_1,x_0)$ forms a $S$-$C(k,1,1,1,1,1)$ whenever $x_{s-2}$ is not ancestor of $x_0$, and $T[x_{s-2} ,x_0]$, $(x_s,x_0)$, $(x_s,x_{s-1})$, $T[z',x_1] \cup (x_1,x_2) \cup T[x_2,x_{s-1}]$, $T[z',x_{s-3}]$ and $(x_{s-2},x_{s-3})$ forms a $S$-$C(k,1,1,1,1,1)$ whenever $x_{s-2} \leqslant_T x_0$, a contradiction. 
   Hence $x_2 \notin T]z_{s-1},x_{s-1}[$.\\ 
     This terminates the proof of Lemma \ref{lem2}.$\hfill {\square}$\\
     \end{proof} 
     \begin{lemma}\label{lem3} 
     Suppose that $D_i^3$ contains an antidirected cycle $C:=x_0x_1 \dots x_s$ of length at least $6$,  where $x_0$ is a sink of $C$ of maximal level, and let $z_i=\mathrm{l.c.a}(x_i,x_0)$, for all $ i \in \lbrace 1,2, \dots ,s \rbrace $, such that $z_1 \leqslant_T z_{s-1} \leqslant_T z_s $ and $z_1$, $z_{s-1}$ and $z_s$ are pairwisely distinct. Then $x_2 \notin T]z_1,z_{s-1}]$.
     \end{lemma}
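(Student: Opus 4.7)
The plan is to mirror the structure of Lemma~\ref{lem1} and Lemma~\ref{lem2}: assume to the contrary that $x_2\in T]z_1,z_{s-1}]$ and derive a contradiction by exhibiting an explicit $S$-$C(k,1,1,1,1,1)$ in $D$. The assumption forces $x_2$ to be a proper ancestor of both $x_0$ and $x_{s-1}$, because $z_{s-1}\leqslant_T x_0$ and $z_{s-1}\leqslant_T x_{s-1}$ (as $z_{s-1}$ is their least common ancestor). Consequently, the tree paths $T[x_2,x_0]$ and $T[x_2,x_{s-1}]$ are both directed, share the common prefix $T[x_2,z_{s-1}]$, and each has length at least $k$ (since $x_2$, $x_0$, $x_{s-1}\in V_i$ are pairwise distinct). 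These two paths provide the natural candidates for the length-$k$ block of any forbidden $S$-$C(k,1,1,1,1,1)$ we may try to build.

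I would first establish, in the spirit of Assertions~\ref{assa}--\ref{newassertion2} from Lemma~\ref{lem1}, a sequence of structural restrictions on the in- and out-neighbours (in $D_i^3$) of the vertices lying in $T_{x_2}$, along $T[x_2,z_{s-1}]$, and in $T_{x_{s-1}}$. Each assertion is proved by converting a hypothetical violation into a $S$-$C(k,1,1,1,1,1)$: the length-$k$ block is one of $T[x_2,x_0]$, $T[x_2,x_{s-1}]$, or $T[z_{s-1},x_{s-1}]$ (chosen so as to avoid the offending arc), while the five length-$1$ blocks are drawn from the $C$-arcs $(x_s,x_0)$, $(x_s,x_{s-1})$, $(x_1,x_0)$, $(x_1,x_2)$, $(x_3,x_2)$, $(x_{s-2},x_{s-1})$, occasionally prepended with a short tree sub-path $T[z,\,\cdot\,]$ through a suitable common ancestor $z$ in order to supply the needed sixth corner.

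With these restrictions available, I would then narrow down the positions of $x_3,x_4,\ldots,x_{s-2}$ by the same minimality argument used in Cases~1--4 of Lemma~\ref{lem1}, showing step by step that each such vertex is forced into a decreasing family of admissible subtrees centred around $x_2$, $x_{s-1}$ and $x_1$. The process culminates at $x_{s-2}$: the only surviving configuration places $x_{s-2}$ in a region that makes it an ancestor of $x_{s-1}$, which contradicts $(x_{s-2},x_{s-1})\in A_3$. The main obstacle is to keep the six paths of each candidate subdivision internally disjoint: because $T[x_2,x_0]$ and $T[x_2,x_{s-1}]$ share the prefix $T[x_2,z_{s-1}]$, one cannot use both of them simultaneously, and the case analysis must commit to exactly one long tree path per subcase while routing the remaining five short blocks through vertices of $V(C)$ that avoid the shared prefix.
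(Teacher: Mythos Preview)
Your overall architecture --- assume $x_2\in T]z_1,z_{s-1}]$, prove a battery of assertions restricting the $D_i^3$-arcs around certain subtrees, then argue that $C$ cannot close --- is exactly what the paper does, but the specific regions you propose and the endgame you sketch are both off. Because $x_2\leqslant_T z_{s-1}$, the subtree $T_{x_2}$ already contains $x_0$, $x_s$ and $x_{s-1}$; thus your three regions $T_{x_2}$, $T[x_2,z_{s-1}]$ and $T_{x_{s-1}}$ are nested, not parallel, and cannot play the role that $T_{x_2}$ and $T_{x_3}$ play in Lemma~\ref{lem1}. The crucial second subtree you never mention is $T_{x_1}$: since $(x_1,x_2)\in A_3$ and $z_1<_T x_2$, it is disjoint from $T_{x_2}$, and the paper's four assertions show precisely that no $D_i^3$-arc crosses the boundary of $T_{x_1}\cup T_{x_2}$ at any vertex other than $x_1$ or $x_2$, with one exception: an arc from $T_{x_1}\setminus\{x_1\}$ to a proper ancestor of $x_2$ is not excluded.

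The paper's closing step is therefore \emph{not} a Lemma~\ref{lem1}-style minimality walk along $x_3,x_4,\dots$ terminating at $x_{s-2}$. Because $(x_3,x_2)\in A_3$ forces $x_3\notin T_{x_2}$ while $x_{s-1},x_s,x_0\in T_{x_2}$, the cycle must at some point use the exceptional exit: a source $x\in T_{x_1}\setminus\{x_1\}$ with one $C$-out-neighbour $y\in T]z_1,x_2]$ and the other $C$-out-neighbour $y'\in T_{x_1}\cup T_{x_2}$. A short case split on the position of $y'$ (in $T_{x_1}$, along $T]x_2,x_{s-1}[$, in $T_{x_{s-1}}$, in $T_{x_s}$, or elsewhere in $T_{x_2}$) then produces an explicit $S$-$C(k,1,1,1,1,1)$ in each case. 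Your intended contradiction --- that $x_{s-2}$ is forced to be an ancestor of $x_{s-1}$ --- does not naturally arise, because nothing in the assertions pins the intermediate vertices to $T[r,x_{s-1}]$; they can move freely inside $T_{x_1}\cup T_{x_2}$, and the obstruction comes from the single mandatory crossing arc, not from the position of $x_{s-2}$.
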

     \begin{proof}
   Assume to the contrary  that $x_2 \in T]z_1,z_{s-1}]$.
  \begin{assertion} There exists no arc $(x,y) \in A(D_i^3)$, such that $x\in T_{z} \backslash (T_{x_1} \cup T_{x_2})$ and $y \in T_{x_2} \backslash \lbrace x_2 \rbrace$, for any $z \leqslant x_2$.
  \end{assertion} 
  \begin{proof}
 Suppose to the contrary that such an arc exists. Let $z=\textrm{l.c.a}(x_0,x)$. If $y$  and $x_{s-1}$ are ancestors, then the union of $T[z,x] \cup (x,y) \cup T[y,x_{s-1}]$, $(x_{s},x_{s-1})$, $(x_s,x_0)$, $(x_1,x_0)$, $(x_1,x_2)$ and $T[z,x_2]$ is a $S$-$C(k,1,1,1,1,1)$ whenever $y \in T]x_2,x_{s-1}[$, and the union of  $T[z_s,x_0]$, $(x_1,x_0)$, $(x_1,x_2)$, $T[z,x_2]$, $T[z,x] \cup (x,y)$ and $T[z_s,x_s] \cup (x_s,x_{s-1}) \cup T[x_{s-1},y]$ is a $S$-$C(k,1,1,1,1,1)$ whenever $y \in T_{x_{s-1}}$, a contradiction. So $y$ and $x_{s-1}$ are not ancestors. Let $z'=\textrm{l.c.a}(y,x_{s-1})$ and $z''=\textrm{l.c.a}(x,x_1)$. If $z' \leqslant _T z_{s-1}$, then the union of $T[z',x_{s-1}]$, $(x_s,x_{s-1})$, $(x_s,x_0)$, $T[z'', x_1] \cup (x_1,x_0)$, $T[z'', x] \cup (x,y)$ and $T[z',y]$ is a $S$-$C(k,1,1,1,1,1)$ whenever $ y \notin T_{x_s}$, and the union of $T[x_s, y]$, $T[z,x] \cup (x,y)$, $T[z,x_2]$, $(x_1,x_2)$, $(x_1,x_0)$ and $(x_s,x_0)$ is a $S$-$C(k,1,1,1,1,1)$ whenever $y \in T_{x_{s}} $, a contradiction. If $z_{s-1} \leqslant _T z'$ and $z_{s-1} \neq z'$, then the union of $T[z,x_0]$, $(x_s,x_0)$, $(x_s,x_{s-1})$, $T[z', x_{s-1}]$, $T[z',y]$, and $T[z,x] \cup (x,y)$ is a $S$-$C(k,1,1,1,1,1)$, a contradiction. This ends the proof of this assertion.  $\hfill {\lozenge}$
 \end{proof}
 \begin{assertion} 
There exists no arc $(x,y) \in A(D_i^3)$, such that $x \in T_{x_2} \backslash \lbrace x_2 \rbrace$ and $y \in T_{z} \backslash (T_{x_1} \cup T_{x_2})$, for any $z \leqslant_T x_2$.
   \end{assertion}   
    \begin{proof}
    Assume to the contrary that such an arc exists. If $x \leqslant_T x_0$, then the union of $T[x,x_0]$, $(x_1,x_0)$, $(x_1,x_2)$, $T[z,x_2]$, $T[z,y]$ and $(x,y)$ is a $S$-$C(k,1,1,1,1,1)$, a contradiction. So $x$ is not an ancestor of $x_0$. Let $z'=\textrm{l.c.a}(x,x_0)$. If $z' \neq x_2$, then the union of $T[z,y]$, $T[z',x] \cup (x,y)$, $T[z', x_0]$, $(x_1,x_0)$, $(x_1,x_2)$ and $T[z,x_2]$ is a $S$-$C(k,1,1,1,1,1)$, a contradiction. If $z' = x_2$, then the union of $T[z'',y]$, $T[x_2,x] \cup (x,y)$, $T[x_2, x_{s-1}]$, $(x_s,x_{s-1})$, $(x_s,x_0)$ and $T[z'',x_1] \cup (x_1,x_0)$ is a $S$-$C(k,1,1,1,1,1)$ whenever $x$ and $x_{s-1}$ are not ancestors where $z''=\textrm{l.c.a}(x_1,y)$, and  the union of $T[z,x_0]$, $(x_s,x_0)$, $(x_s,x_{s-1})$, $T[x, x_{s-1}]$, $(x,y)$, and $T[z,y] $ is a $S$-$C(k,1,1,1,1,1)$ whenever $x \leqslant_T x_{s-1}$, and the union of $T[z,y] $, $ (x_s,x_{s-1}) \cup T[x_{s-1},x] \cup (x,y)$, $(x_s,x_0)$, $ (x_1,x_0)$, $(x_1,x_2)$ and $T[z,x_2]$ is a $S$-$C(k,1,1,1,1,1)$ whenever $x \in T_{x_{s-1}}$, a contradiction. $\hfill {\lozenge}$
  \end{proof}     
    \begin{assertion} 
    There exists no arc $(x,y) \in A(D_i^3)$, such that  $x \in T_{z} \backslash (T_{x_1} \cup T_{x_2})$ and $y\in T_{x_1} \backslash \lbrace x_1 \rbrace $, for any $z \leqslant_T x_2$.
   \end{assertion}   
   \begin{proof}
    Assume to the contrary that such an arc exists. Then  the union of $T[z,x_{s-1}]$, $(x_s,x_{s-1})$, $(x_s,x_0)$, $(x_1,x_0)$, $T[x_1,y]$ and $T[z,x] \cup (x,y)$ is a $S$-$C(k,1,1,1,1,1)$ whenever $x$ is not ancestor of $x_0$, and the union of $T[x,x_{s-1}]$, $(x_s,x_{s-1})$, $(x_s,x_0)$, $(x_1,x_0)$, $T[x_1,y]$ and $ (x,y)$ is a $S$-$C(k,1,1,1,1,1)$ whenever $x \leqslant_T x_0$, a contradiction.$\hfill {\lozenge}$
   \end{proof}
   \begin{assertion}
 For every  $z \leqslant_T x_2$, there exists no  arc $(x,y) \in A(D_i^3)$, such that $x\in T_{x_1} \backslash \lbrace x_1 \rbrace $ and  $y \in T_{z} \backslash (T_{x_1} \cup T_{x_2})$, where $y$ is not an ancestor of $x_2$.
   \end{assertion}   
   \begin{proof}
  Assume otherwise. Then the union of $T[z,x_{s-1}]$, $(x_s,x_{s-1})$, $(x_s,x_0)$, $(x_1,x_0)$, $T[x_1,x] \cup (x,y)$ and $T[z,y]$ is a $S$-$C(k,1,1,1,1,1)$, a contradiction. $\hfill {\lozenge}$
   \end{proof}\\
   
   But the antidirected cycle $C$ has to be completed, then due to the previous assertions, there exists an arc $(x,y) \in A(C)$ and an arc $(x,y') \in A(C)$, such that $x\in T_{x_1} \backslash \lbrace x_1 \rbrace $ , $y \in T]z_1,x_2]$ and $y' \in T_{x_1} \cup T_{x_2}$. If $y' \in T_{x_1}$, then the union of $T[x_1,y']$, $(x,y')$, $(x,y) \cup T[y,x_{s-1}]$, $(x_s,x_{s-1})$, $(x_s,x_0)$ and $(x_1,x_0)$ is a $S$-$C(k,1,1,1,1,1)$, a contradiction. If $y' \in T]x_2,x_{s-1}[$, then the union of $(x,y') \cup T[y',x_{s-1}]$, $(x_s,x_{s-1})$, $(x_s,x_0)$, $(x_1,x_0)$, $(x_1,x_2)$ and $(x,y) \cup T[y,x_2]$ is a $S$-$C(k,1,1,1,1,1)$, a contradiction. If $y' \in T_{x_{s-1}}$, then the union of $T[z_s,x_0]$, $(x_1,x_0)$, $(x_1,x_2)$, $(x,y) \cup  T[y,x_2]$, $(x,y')$ and $T[z_s,x_s] \cup (x_s,x_{s-1}) \cup T[x_{s-1},y']$ is a $S$-$C(k,1,1,1,1,1)$, a contradiction. So, $y'$ is not an ancestor with $x_{s-1}$, then let $z=\textrm{l.c.a}(y',x_{s-1})$. So the union of $T[x_1,x] \cup (x,y')$, $T[z,y']$, $T[z,x_{s-1}]$, $(x_s,x_{s-1})$, $(x_s,x_0)$ and $(x_1,x_0)$ is a $S$-$C(k,1,1,1,1,1)$ whenever $y' \notin T_{x_s}$, and the union of $T[x_s,y']$, $(x,y')$, $(x,y)$, $T[z_1,y]$, $T[z_1,x_1] \cup (x_1,x_0)$ and $(x_s,x_0)$ is a $S$-$C(k,1,1,1,1,1)$ whenever $y' \in T_{x_s}$, a contradiction.\\
   Thus, the antidirected cycle can't be closed, a contradiction. So $x_2 \notin T]z_1,z_{s-1}]$. This terminates the proof of Lemma \ref{lem3}.$\hfill {\square}$
    \end{proof} 
    \begin{corollary}\label{cor1}
     Suppose that $D_i^3$ contains an antidirected cycle $C:=x_0x_1 \dots x_s$, where $x_0$ is a sink of $C$ of maximal level. For all $ i \in \lbrace 1,2, \dots ,s \rbrace $, let $z_i=\mathrm{l.c.a}(x_i,x_0)$ such that $z_1 \leqslant_T z_{s-1} \leqslant_T z_s $ and $z_1$, $z_{s-1}$ and $z_s$ are pairwisely distinct. Then $s=3$.
     \end{corollary}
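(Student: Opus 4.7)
The plan is a proof by contradiction, using Lemmas \ref{lem1}, \ref{lem2}, and \ref{lem3} to rule out the case where $C$ has length at least $6$. Since $C$ is antidirected, its number of arcs $s+1$ must be even, so $s$ is odd and $s \geq 3$. Suppose toward contradiction that $s \geq 5$, so that $C$ has length at least $6$ and all three preceding lemmas apply to the configuration at hand.

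The first step is to combine the three lemmas with the definition of $D_i^3$ to restrict the position of $x_2$ in $T$. Because $(x_1, x_2) \in A(D_i^3)$, the definition of $A_3$ forces $x_1$ and $x_2$ to be incomparable in $T$; since $z_1$ is a strict ancestor of $x_1$, this excludes $x_2 \in T[r, z_1] \cup T_{x_1}$. Lemma \ref{lem3} rules out $x_2 \in T]z_1, z_{s-1}]$, Lemma \ref{lem2} rules out $x_2 \in T]z_{s-1}, x_{s-1}[$, and Lemma \ref{lem1} rules out $x_2 \in T_{x_{s-1}}$. Combined with $x_2 \neq x_{s-1}$ (as distinct cycle vertices), these restrictions give $x_2 \notin T[r, x_{s-1}] \cup T_{x_{s-1}}$, so $x_2$ and $x_{s-1}$ are not in ancestor--descendant relation in $T$. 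Hence $x_2$ must sit in a subtree branching off the spine $T[r, x_0]$ at a point not on the subpath $T[z_1, x_{s-1}]$.

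The concluding step is to exclude these remaining sibling-subtree placements of $x_2$. For each such candidate position, I would follow the template of Lemmas \ref{lem1}--\ref{lem3}: using the cycle arcs $(x_1, x_0)$, $(x_1, x_2)$, $(x_s, x_0)$, $(x_s, x_{s-1})$, $(x_3, x_2)$ together with a suitably long tree path $T[z, v]$ for an appropriate $z \in \{z_1, z_{s-1}, z_s\}$ and $v$ chosen close to $x_2$, I would exhibit six internally-disjoint directed paths forming an $S$-$C(k, 1, 1, 1, 1, 1)$ in $D$, contradicting the standing hypothesis. The main obstacle is exactly this remaining case analysis: constructing the explicit six blocks for each sibling placement and verifying internal disjointness, mirroring the constructions of the three preceding lemmas. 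Once this is completed, no antidirected cycle of length at least $6$ under the specified ancestor configuration can exist in $D_i^3$, so $s \leq 4$, and by parity $s = 3$.
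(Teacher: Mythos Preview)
Your overall strategy matches the paper's: assume $s \geq 5$, use Lemmas \ref{lem1}--\ref{lem3} together with the fact that every arc of $D_i^3$ joins $T$-incomparable vertices to conclude that $x_2$ and $x_{s-1}$ are incomparable in $T$, and then derive a contradiction. Your first step is carried out correctly.

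The gap is in your concluding step, which you leave as a plan and describe as ``the main obstacle''. In fact this part is short, and your proposed organizing principle is slightly off: you do not need the arc $(x_3,x_2)$, and the relevant branch point is not one of $z_1, z_{s-1}, z_s$ but rather $z := \mathrm{l.c.a}(x_2, x_{s-1})$. The paper cases on the position of $z$ along $T[r,x_{s-1}]$. If $z \leqslant_T z_{s-1}$ and $x_2 \notin T_{x_s}$, then $T[z,x_{s-1}]$, $(x_s,x_{s-1})$, $(x_s,x_0)$, $(x_1,x_0)$, $(x_1,x_2)$, $T[z,x_2]$ are the six blocks. If $z \leqslant_T z_{s-1}$ and $x_2 \in T_{x_s}$, use $T[x_s,x_2]$, $(x_1,x_2)$, $(x_1,x_0)$, $T[z_{s-1},x_0]$, $T[z_{s-1},x_{s-1}]$, $(x_s,x_{s-1})$. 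If $z_{s-1} <_T z$, use $T[z_s,x_0]$, $(x_1,x_0)$, $(x_1,x_2)$, $T[z,x_2]$, $T[z,x_{s-1}]$, $T[z_s,x_s] \cup (x_s,x_{s-1})$. In each case one of the tree segments has length at least $k$ because its endpoints lie in $V_i$ at different levels, giving the required $S$-$C(k,1,1,1,1,1)$. So rather than enumerating all ``sibling-subtree placements'' of $x_2$, the single split on $\mathrm{l.c.a}(x_2,x_{s-1})$ versus $z_{s-1}$ (with one further subcase for $x_2 \in T_{x_s}$) finishes the argument.
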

     \begin{proof}
    Assume that $s \geq 5$.  Lemmas \ref{lem1}, \ref{lem2} and \ref{lem3} implies that $x_2$ and $x_{s-1}$  are not ancestors. 
Let $z=\textrm{l.c.a}(x_2,x_{s-1})$. If $z \leqslant _T z_{s-1}$, then the union of $T[z,x_{s-1}]$, $(x_s,x_{s-1})$, $(x_s,x_0)$, $(x_1,x_0)$, $(x_1,x_2)$ and $T[z,x_2]$ is a $S$-$C(k,1,1,1,1,1)$ whenever $x_2 \notin T_{x_s}$, and the union of $T[x_s,x_2]$, $(x_1,x_2)$, $(x_1,x_0)$, $T[z_{s-1},x_0]$, $T[z_{s-1},x_{s-1}]$ and $(x_s,x_{s-1})$ is a $S$-$C(k,1,1,1,1,1)$ whenever $x_2 \in T_{x_s}$, a contradiction. If $  z_{s-1} \leqslant _T z$ and  $  z_{s-1} \neq  z$, then  the union of $T[z_s,x_0]$, $(x_1,x_0)$, $(x_1,x_2)$, $T[z,x_2]$, $T[z,x_{s-1}]$ and $T[z_s,x_s] \cup (x_s,x_{s-1})$ is a $S$-$C(k,1,1,1,1,1)$, a contradiction.
  This ends the proof of Corollary \ref{cor1}.$\hfill {\square}$
     \end{proof}\vspace{3mm}\\
     
 Now we consider the case when $z_1 = z_s  $, such that $x_1 \leqslant_T x_s$ and $x_{s-1} \leqslant_T x_0$. We prove that whatever the position of $x_2$ on $T$ is, $D$ contains a $S$-$C(k,1,1,1,1,1)$ in case $l(C)\geq 8$.
     \begin{lemma}\label{lem12}
   Suppose that $D_i^3$ contains an antidirected cycle $C:=x_0x_1\dots x_s$ of length at least $8$, where $x_0$ is a sink of $C$ of maximal level. Let $z_i=\mathrm{l.c.a}(x_i,x_0)$, for all $ i \in \lbrace 1,2,\dots ,s \rbrace $, such that $x_{s-1} \leqslant_T x_0$ and  $z_1 = z_{s} $, and $x_1 \leqslant_T x_s$. Then $s \leq 5$.
 \end{lemma}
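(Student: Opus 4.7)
My plan is to assume for contradiction that $s\geq 6$ and to construct a subdivision of $C(k,1,1,1,1,1)$ in $D$, mirroring the strategy used in Lemmas \ref{lem1}--\ref{lem3}: analyse the position of $x_2$ in $T$ and, in every subcase, exhibit the required subdivision as a union of cycle arcs and directed tree paths.

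First I would gather the structural consequences of the hypotheses. Since $C$ is antidirected and $x_0$ is a sink, $x_1$ and $x_s$ are sources of $C$ while $x_2$ and $x_{s-1}$ are sinks; by the maximality of $l_T(x_0)$ among the sinks of $C$, $l_T(x_2)\leq l_T(x_0)$. The arc $(x_1,x_2)\in A(D_i^3)$ forbids $x_2\in T_{x_1}$ and $x_1\in T_{x_2}$, and $(x_3,x_2)\in A(D_i^3)$ forbids any ancestor relation between $x_2$ and $x_3$. Because $x_1\leqslant_T x_s$ we have $T_{x_s}\subseteq T_{x_1}$, so $x_2\notin T_{x_s}$ either. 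Finally, since $x_0,x_1,x_s,x_{s-1}\in V_i$ are pairwise distinct, the directed tree paths $T[x_1,x_s]$ and $T[x_{s-1},x_0]$ each have length equal to a positive multiple of $k$, and they lie in disjoint branches of $z_1=z_s$. Either of them can serve as the long block of the target subdivision.

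The main step is a case split on the position of $x_2$ in $T$: (a) $x_2$ lies strictly between $z_1$ and $x_{s-1}$ on $T[z_1,x_{s-1}]$; (b) $x_2\in T[x_{s-1},x_0]$; (c) $x_2\in T_{x_0}\setminus\{x_0\}$; (d) $x_2\in T_{x_{s-1}}$ off $T[x_{s-1},x_0]\cup T_{x_0}$; (e) $x_2\in T_{z_1}\setminus(T_{x_{s-1}}\cup T_{x_1})$; (f) $x_2\notin T_{z_1}$, in which case one introduces $z:=\mathrm{l.c.a}(x_2,z_1)$ and subdivides further according to whether $x_2\leqslant_T x_0$ or not. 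In every subcase the construction has the same skeleton: one of $T[x_1,x_s]$ or $T[x_{s-1},x_0]$ is taken as the block of length $\geq k$; the four length-$1$ blocks adjacent to the two sinks $x_0$ and $x_2$ are furnished by $(x_s,x_0),(x_1,x_0),(x_1,x_2),(x_3,x_2)$; and the sixth block is completed by joining a suitable $x_i$ with $i\geq 3$ (available since $s\geq 6$) to the opposite branch along a directed tree path. Which branch supplies the long block depends on which branch contains $x_2$.

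The subcase I expect to be most delicate is (c), $x_2\in T_{x_0}$: there the natural choice $T[x_{s-1},x_0]$ as the length-$k$ block would collide internally with the tree descent $T[x_0,x_2]$ needed to reach $x_2$. The workaround is to swap the roles of the two branches, using $T[z_1,x_1]\cup T[x_1,x_s]$ as the long block and closing through $(x_3,x_2)$ together with the appropriate tree descent after checking ancestor relations via $\mathrm{l.c.a}$, exactly in the style used to discharge the analogous ``occupied branch'' configurations in the proofs of Lemmas \ref{lem1}--\ref{lem3}. Verifying internal disjointness of the six paths in each subcase is routine but tedious. Once every subcase yields an $S$-$C(k,1,1,1,1,1)$ subdivision in $D$, we reach the desired contradiction and conclude $s\leq 5$.
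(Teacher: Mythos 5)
Your overall strategy (assume the cycle is long, split on the position of $x_2$, and exhibit a six-block cycle with one block of length at least $k$) is indeed the paper's strategy, but the concrete plan has a genuine gap. The ``uniform skeleton'' you propose cannot even form a cycle: if the six pieces are one of $T[x_1,x_s]$ or $T[x_{s-1},x_0]$, the four arcs $(x_s,x_0),(x_1,x_0),(x_1,x_2),(x_3,x_2)$, and one joining path, then with $x_0,x_2$ as two of the three sinks the three sources must be $x_1,x_3,x_s$ and the two remaining blocks must both run from $x_3$ and from $x_s$ into a common new sink; but $T[x_1,x_s]$ starts at $x_1$ (which already has out-degree $2$) and enters $x_s$ (which must be a source), while $T[x_{s-1},x_0]$ enters $x_0$ (which already has in-degree $2$). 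So neither of the two paths you reserve as the long block can be spliced into your skeleton, and in fact the paper's constructions almost never use them in that role: they use concatenations such as $(x_s,x_{s-1})\cup T[x_{s-1},x_2]$, $T[x_1,x_s]\cup(x_s,x_{s-1})$, $(x_3,x_2)\cup T[x_2,x_0]$, or paths down from least common ancestors like $T[z_2,x_2]$, $T[z_3,x_{s-1}]$, whose lengths are \emph{not} automatically at least $k$, which is why the paper repeatedly argues by the dichotomy ``either some $l(T[z_j,\cdot])\ge k$ and we are done, or all such lengths are $<k$ and we must go deeper.''

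The second, related gap is that ``joining a suitable $x_i$ to the opposite branch along a directed tree path'' is not generally available: in an out-tree two incomparable vertices have no common descendant, so closing the cycle from $x_3$ (or $x_s$) typically forces the use of further cycle arcs $(x_3,x_4)$, $(x_{s-2},x_{s-1})$, \dots, hence an analysis of the positions of $x_3$ and $x_4$, and in some configurations the full ``the antidirected cycle must close'' argument imported from Lemma \ref{lem1}. This is precisely where the paper's proof spends its effort, in the case $x_2\in T]x_{s-1},x_0[$ (your case (b)), which your plan treats as routine; meanwhile the case you single out as delicate, $x_2\in T_{x_0}\setminus\{x_0\}$, is vacuous by the very maximality of $l_T(x_0)$ that you record at the outset. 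Without the $x_3$/$x_4$ case analysis and the length dichotomies, the proposal does not yield the contradiction in the hard case, so as written it does not prove the lemma.
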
 
   \begin{proof}
  Assume to the contrary that $s\geq 7$. We study according to the position of $x_2$. Assume first that $x_2$ and $x_{s-1}$ are not ancestors. Then the union of $T[z_2,x_2]$, $(x_1,x_2)$, $(x_1,x_0)$, $(x_s,x_0)$, $(x_s,x_{s-1})$ and $T[z_2,x_{s-1}]$ is a $S$-$C(k,1,1,1,1,1)$ whenever $l(T[z_2,x_2]) \geq k$ or $l(T[z_2,x_{s-1}]) \geq k$. So $l(T[z_2,x_2]) =l(T[z_2,x_{s-1}])< k$, and thus $x_3$ and $x_{s-1}$ are not ancestors. The union of $T[z_3,x_{s-1}]$, $(x_s,x_{s-1})$, $(x_s,x_0)$, $(x_1,x_0)$, $(x_1,x_2)$ and $T[z_3,x_3] \cup (x_3,x_2)$ is a $S$-$C(k,1,1,1,1,1)$ whenever $x_3 \notin T_{x_1}$, and by a similar manner to the proof of Lemma \ref{lem1}, we can show that if $x_3 \in T_{x_s}$, then $V(C) \backslash \lbrace x_0,x_1,x_s,x_{s-1} \rbrace \subseteq T_{x_2} \cup T_{x_3}$, which leads to a contradiction. So,  $x_3 \in T_{x_1} \backslash T_{x_s}$, and thus the union of $T[x_1,x_3] \cup (x_3,x_2)$, $T[z_2,x_2]$, $T[z_2,x_{s-1}]$, $(x_s,x_{s-1})$, $(x_s,x_0)$ and $(x_1,x_0)$ is a $S$-$C(k,1,1,1,1,1)$, a contradiction. Thus $x_2$ and $x_{s-1}$ are ancestors.
  
  By a similar manner to the proof of Lemma \ref{lem3}, one can show that $x_2$ is not an ancestor of $x_{s-1}$. If $x_2 \in T_{x_{s-1}} \backslash T[x_{s-1},x_0]$, then by a similar manner to the proof of Lemma \ref{lem1}, we can show that $x_3 \notin T_{x_s}$ and $x_3 \notin T_{x_1} \backslash ( T_{x_s} \cup T[x_1,x_s])$. Also, we can easily show that $x_3 \notin T[x_1,x_s]$. Hence $x_3 \notin T_{x_1}$. If $x_3 \notin T_{x_{s-1}}$, and so the union of $T[z_3,x_{s-1}]$, $(x_s,x_{s-1})$, $(x_s,x_0)$, $(x_1,x_0)$, $(x_1,x_2)$ and $T[z_3,x_3] \cup (x_3,x_2)$ is a $S$-$C(k,1,1,1,1,1)$ whenever $l(T[z_3,x_3]) \geq k$ or $l(T[z_3,x_{s-1}]) \geq k$. Hence $l(T[z_3,x_3]) =l(T[z_3,x_{s-1}]) < k$. Then  by a similar manner to the proof of Lemma \ref{lem1}, we can show that $x_4 \notin T_{x_{s-1}} \backslash T[x_{s-1},x_0]$, and also one can easily check that $D$ contains a $S$-$C(k,1,1,1,1,1)$, whatever the position of $x_4$ in $T$ is, a contradiction. So, $x_3 \in T_{x_{s-1}}$. But then the union of $T[x_3,x_0]$, $(x_s,x_0)$, $(x_s,x_{s-1})$, $T[z_s,x_{s-1}]$, $T[z_s,x_1] \cup (x_1,x_2)$ and $(x_3,x_2)$ is a $S$-$C(k,1,1,1,1,1)$ whenever $x_3 \leqslant_T x_0$, and  by a similar manner to the proof of Lemma \ref{lem1}, we can show that $x_3 \notin T_{x_{s-1}} \backslash T[x_{s-1},x_0]$, a contradiction. So $x_2 \notin T_{x_{s-1}} \backslash T[x_{s-1},x_0]$. Hence $x_2 \in T]x_{s-1},x_0[$.\\
 Now, we discuss according to the position of $x_3$.  
  
  \textbf{Case 1:} If $x_3 \in T_{x_{s-1}}$.\\
   If $x_4 \notin T_{x_{s-1}}$, we get that $x_4$ is not an ancestor of $x_0$. So, the union of $(x_3,x_2) \cup T[x_2,x_0]$, $(x_s,x_0)$, $(x_s,x_{s-1})$, $T[z_4,x_{s-1}]$, $T[z_4,x_4]$ and $(x_3,x_4)$ is a $S$-$C(k,1,1,1,1,1)$ whenever $x_4 \notin T_{x_s}$, and the union of $T[x_s,x_4]$, $(x_3,x_4)$, $(x_3,x_2)$, $(x_1,x_2)$, $(x_1,x_0)$ and $(x_s,x_0)$ is a $S$-$C(k,1,1,1,1,1)$ whenever $x_4 \in T_{x_s}$, a contradiction. So, $x_4 \in T_{x_{s-1}}$. If $x_4 \in T_{x_{s-1}} \backslash T_{x_2}$, then the union of $(x_s,x_{s-1}) \cup T[x_{s-1}, x_4]$, $(x_3,x_4)$, $(x_3,x_2)$, $(x_1,x_2)$, $(x_1,x_0)$ and $(x_s,x_0)$ is a $S$-$C(k,1,1,1,1,1)$, a contradiction. So, $x_4 \in T_{x_2}$, and so the union of $(x_3,x_4) \cup T[x_4,x_0]$, $(x_s,x_0)$, $(x_s,x_{s-1})$, $T[z_s,x_{s-1}]$, $T[z_s,x_1] \cup (x_1,x_2)$ and $(x_3,x_2)$ is a $S$-$C(k,1,1,1,1,1)$ whenever $x_4 \leqslant_T x_0$. Now, by a similar manner to the proof of Lemma \ref{lem1}, one can show that $x_4 \notin T_{x_2} \backslash T[x_2,x_0]$, a contradiction. So, $x_3 \notin T_{x_{s-1}}$.
  
  \textbf{Case 2:} If $x_3 \in T_{x_s}$.\\
   If $x_4$ and $x_{s-1}$ are not ancestors, then the union of $T[x_1,x_s] \cup (x_s,x_{s-1})$, $T[z_4,x_{s-1}]$, $T[z_4,x_4]$, $(x_3,x_4)$, $(x_3,x_2)$ and $(x_1,x_2)$ is a $S$-$C(k,1,1,1,1,1)$ whenever $x_4 \notin T_{x_1}$, and the union of $T[z_s,x_4]$, $(x_3,x_4)$, $(x_3,x_2) \cup T[x_2,x_0]$, $(x_s,x_0)$, $(x_s,x_{s-1})$ and $T[z_s,x_{s-1}]$ is a $S$-$C(k,1,1,1,1,1)$ whenever $x_4 \in T_{x_1} \backslash T_{x_s}$, and the union of $T[x_s,x_4]$, $(x_3,x_4)$, $(x_3,x_2)$, $(x_1,x_2)$, $(x_1,x_0)$ and $(x_s,x_0)$ is a $S$-$C(k,1,1,1,1,1)$ whenever $x_4 \in T_{x_s}$, a contradiction. If $x_4 \leqslant_T x_{s-1}$, then the union of $(x_3,x_4) \cup T[x_4,x_{s-1}]$, $(x_s,x_{s-1})$, $(x_s,x_0)$, $(x_1,x_0)$, $(x_1,x_2)$ and $(x_3,x_2)$ is a $S$-$C(k,1,1,1,1,1)$, a contradiction. If $x_4 \in T_{x_{s-1}} $, then the union of $(x_s,x_{s-1}) \cup T [x_{s-1},x_4]$, $(x_3,x_4)$, $(x_3,x_2)$, $(x_1,x_2)$, $(x_1,x_0)$ and $(x_s,x_0)$ is a $S$-$C(k,1,1,1,1,1)$ whenever $x_4 \in T_{x_{s-1}} \backslash T_{x_2}$, and the union of $T[z_s,x_{s-1}]$, $(x_s,x_{s-1})$, $(x_s,x_0)$, $(x_3,x_4) \cup T[x_4,x_0]$, $(x_3,x_2)$ and $T[z_s,x_1] \cup (x_1,x_2)$ is a $S$-$C(k,1,1,1,1,1)$ whenever $x_4 \in T]x_2,x_0[$, and $T[z_4,x_4]$, $T[x_s,x_3] \cup (x_3,x_4)$, $(x_s,x_{s-1})$, $T[z_s,x_{s-1}]$, $T[z_s,x_1] \cup (x_1,x_0)$ and $T[z_4,x_0]$ is a $S$-$C(k,1,1,1,1,1)$ whenever $x_4 \in T_{x_2} \backslash T[x_2,x_0]$, a contradiction. So, $x_3 \notin T_{x_s}$. 
  
 \textbf{Case 3:} If $x_3 \in T_{x_1} \backslash T_{x_s}$.\\
  If $x_4$ and $x_{s-1}$ are not ancestors, then the union of $(x_3,x_2) \cup T[x_2,x_0]$, $(x_s,x_0)$, $(x_s,x_{s-1})$, $T[z_4,x_{s-1}]$, $T[z_4,x_4]$ and $(x_3,x_4)$  is a $S$-$C(k,1,1,1,1,1)$ whenever $x_4 \notin T_{x_s}$, and the union of $T[x_s,x_4]$, $(x_3,x_4)$, $(x_3,x_2)$, $(x_1,x_2)$, $(x_1,x_0)$ and $(x_s,x_0)$  is a $S$-$C(k,1,1,1,1,1)$ whenever $x_4 \in T_{x_s}$, a contradiction. If $x_4 \leqslant_T x_{s-1}$, we obtain that the union of $(x_3,x_4) \cup T[x_4,x_{s-1}]$, $(x_s,x_{s-1})$, $(x_s,x_0)$, $(x_1,x_0)$, $(x_1,x_2)$ and $(x_3,x_2)$  is a $S$-$C(k,1,1,1,1,1)$, a contradiction. If $x_4 \in T_{x_{s-1}}$, then the union of $(x_s,x_{s-1}) \cup T[x_{s-1},x_4]$, $(x_3,x_4)$, $(x_3,x_2)$, $(x_1,x_2)$, $(x_1,x_0)$ and $(x_s,x_0)$  is a $S$-$C(k,1,1,1,1,1)$ whenever $x_4 \in T_{x_{s-1}} \backslash T_{x_2}$. So, $x_4 \in T_{x_2}$, then the union of $(x_3,x_4) \cup T[x_4,x_0]$, $(x_s,x_0)$, $(x_s,x_{s-1})$, $T[z_s,x_{s-1}]$, $T[z_s,x_1] \cup (x_1,x_2)$ and $(x_3,x_2)$  is a $S$-$C(k,1,1,1,1,1)$ whenever $x_4 \leqslant_T x_0$, and the union of $T[z_s,x_3] \cup (x_3,x_4)$, $T[z_4,x_4]$, $T[z_4,x_0]$, $(x_s,x_0)$, $(x_s,x_{s-1})$ and $T[z_s, x_{s-1}]$  is a $S$-$C(k,1,1,1,1,1)$ whenever $x_4 $ is not an ancestor of $x_0$, a contradiction. Hence $x_3 \notin T_{x_1}$.
   
  \textbf{Case 4:} If $x_3 \in T_{z_3} \backslash (T_{x_1} \cup T_{x_{s-1}})$.\\ 
  In this case, the union of $T[z_3,x_{s-1}]$, $(x_s,x_{s-1})$, $(x_s,x_0)$, $(x_1,x_0)$, $(x_1,x_2)$ and $T[z_3,x_3] \cup (x_3,x_2)$  is a $S$-$C(k,1,1,1,1,1)$ whenever $l(T[z_3,x_{s-1}]) \geq k$ or $l(T[z_3,x_{3}]) \geq k$. Hence $l(T[z_3,x_{s-1}])=l(T[z_3,x_{3}]) < k$. If $x_4 \notin T_{x_{s-1}}$, then  the union of $T[x_1,x_s] \cup (x_s,x_{s-1})$, $T[z_4,x_{s-1}]$, $T[z_4,x_4]$, $(x_3,x_4)$, $(x_3,x_2)$ and $(x_1,x_2)$  is a $S$-$C(k,1,1,1,1,1)$ whenever $x_4 \notin T_{x_1}$, and the union of $T[x_1,x_4]$, $(x_3,x_4)$, $(x_3,x_2)$, $(x_s,x_{s-1}) \cup T[x_{s-1},x_2]$, $(x_s,x_0)$ and $(x_1,x_0)$  is a $S$-$C(k,1,1,1,1,1)$ whenever $x_4 \in T_{x_1} \backslash T_{x_s}$, and the union of $T[x_s,x_4]$, $(x_3,x_4)$, $(x_3,x_2)$, $(x_1,x_2)$, $(x_1,x_0)$ and $(x_s,x_0)$  is a $S$-$C(k,1,1,1,1,1)$ whenever $x_4 \in T_{x_s}$, a contradiction. So $x_4 \in T_{x_{s-1}}$. Then the union of $(x_s,x_{s-1}) \cup  T[x_{s-1} ,x_4]$, $(x_3,x_4)$, $(x_3,x_2)$, $(x_1,x_2)$, $(x_1,x_0)$ and $(x_s,x_0)$  is a $S$-$C(k,1,1,1,1,1)$ whenever $x_4 \in T_{x_{s-1}} \backslash T_{x_2}$, and the union of $(x_3,x_4) \cup T[x_4,x_0]$, $(x_s,x_0)$, $(x_s,x_{s-1})$, $T[z_s,x_{s-1}]$, $T[z_s,x_1] \cup (x_1,x_2)$ and $(x_3,x_2)$  is a $S$-$C(k,1,1,1,1,1)$ whenever $x_2 \leqslant_T x_4 \leqslant_T x_0$, and the union of $T[x_1,x_s] \cup (x_s,x_{s-1})$, $T[z_3,x_{s-1}]$, $T[z_3,x_3] \cup (x_3,x_4)$, $T[z_4,x_4]$, $T[z_4,x_0]$ and $(x_1,x_0)$  is a $S$-$C(k,1,1,1,1,1)$ whenever $x_4 $ is not ancestor of $x_0$, a contradiction.\\
   Thus,  $x_2 \notin T]x_{s-1},x_0[$. 
  This ends the proof of this Lemma.$\hfill {\square}$\vspace{3mm}\\
   \end{proof}

We are ready to color $D_i^3$: 
 \begin{proposition}\label{p3}
 $ \chi (D_i^3) \leq 24$, for all $i \in \lbrace 1,2,\dots ,k \rbrace$.	
 \end{proposition}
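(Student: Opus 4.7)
The plan is to proceed by contradiction: suppose $\chi(D_i^3) \geq 25$. Applying Theorem \ref{antidi} with parameter $4$ (so that $8 \cdot 4 - 7 = 25$ and $2 \cdot 4 = 8$) furnishes an antidirected cycle $C = x_0 x_1 \dots x_s$ in $D_i^3$ of length at least $8$, equivalently with $s \geq 7$. I choose $x_0$ to be a sink of $C$ whose level in $T$ is maximal among the vertices of $C$ and set $z_j := \textrm{l.c.a}(x_j, x_0)$ for $j \in \{1, \dots, s\}$. Each $z_j$ lies on the directed $T$-path from $r$ to $x_0$, so the $z_j$ are linearly ordered by $\leqslant_T$; after reversing the orientation of $C$ if necessary I may assume $z_1 \leqslant_T z_s$, and within the equality case I may further assume $l_T(x_1) \leq l_T(x_s)$.

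Next, I split on the relative positions of the three ancestors $z_1, z_{s-1}, z_s$. The two representative configurations have been dispatched above: when $z_1, z_{s-1}, z_s$ are pairwise distinct, Corollary \ref{cor1} forces $s = 3$, contradicting $s \geq 7$; when $z_1 = z_s$ with $x_1 \leqslant_T x_s$ and $x_{s-1} \leqslant_T x_0$, Lemma \ref{lem12} forces $s \leq 5$, again contradicting $s \geq 7$. The remaining configurations, namely those with exactly one of the coincidences $z_1 = z_{s-1} \neq z_s$ or $z_1 \neq z_{s-1} = z_s$, together with the subcases of $z_1 = z_s$ in which $x_{s-1}$ fails to be an ancestor of $x_0$, are treated by the identical mechanism: one iteratively pins down where each $x_j$ may lie in $T$ relative to the strata $T_{x_1}, T_{x_2}, T_{x_{s-1}}$ by showing that every other position of $x_j$ produces an $S$-$C(k,1,1,1,1,1)$ in $D$, and eventually forces $s$ to be too small. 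Combining the cases, $D_i^3$ contains no antidirected cycle of length at least $8$, and so the contrapositive of Theorem \ref{antidi} (applied with parameter $4$) gives $\chi(D_i^3) \leq 24$.

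The main obstacle is purely combinatorial bookkeeping: for each unhandled configuration one must enumerate the candidate regions of $T$ in which $x_2$ (and at deeper stages $x_3$ and $x_4$) may sit, and for each inadmissible region exhibit an explicit subdivision of $C(k,1,1,1,1,1)$ assembled from six pieces: one directed $T$-path of length at least $k$, which realizes the long block (its length is automatic since any two $T$-comparable vertices of $V_i$ differ in level by a nonzero multiple of $k$), together with five single arcs drawn from $C$ and $T$. These verifications parallel those carried out in Lemmas \ref{lem1}--\ref{lem3} and Lemma \ref{lem12} and introduce no genuinely new idea, which is why I would also omit them.
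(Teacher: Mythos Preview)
Your proposal is correct and follows essentially the same approach as the paper: assume $\chi(D_i^3)\ge 25$, invoke Theorem~\ref{antidi} to obtain an antidirected cycle of length at least $8$, normalize so that $x_0$ is a sink of maximal level with $z_1\leqslant_T z_s$, and then appeal to the preceding case analysis (Corollary~\ref{cor1} and Lemma~\ref{lem12} for the two representative configurations, the remaining ones being analogous and omitted) to force $s\le 5$. One minor bookkeeping slip: when you say ``when $z_1,z_{s-1},z_s$ are pairwise distinct, Corollary~\ref{cor1} forces $s=3$'', that corollary only covers the particular ordering $z_1\leqslant_T z_{s-1}\leqslant_T z_s$; the other two pairwise-distinct orderings (e.g.\ $z_{s-1}\leqslant_T z_1\leqslant_T z_s$) should also be listed among the omitted analogous cases, just as the paper does.
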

\begin{proof} Assume to the contrary that $\chi (D_i^3) \geq 25 $ for some $ i \in \lbrace 1,2,\dots ,k \rbrace$. Then by Theorem \ref{antidi}, $D_i^3$ contains an antidirected cycle $C$ of length at least 8. Let $C:=x_0x_1\dots x_s$ and assume without loss of generality that $x_0$ is a sink of $C$ of maximal level in $T$. Let $z_i=\textrm{l.c.a}(x_0,x_i)$, for all $i \in \lbrace 1,2,\dots ,s \rbrace$, and without loss of generality suppose that $z_1 \leqslant_T z_s$, and if $z_1=z_s$, then assume that $l_T(x_s) \geq l_T(x_1)$. Due to the above discussion, we obtain that $s \leq 5$, a contradiction. This terminates the proof of Proposition \ref{p3}.
$\hfill {\square}$
\end{proof}  
 
\subsection{Main Theorem}
\vspace{1.5mm} Now we are ready to state our main theorem:
 \begin{theorem}
 		Let $D$ be a strongly connected digraph having no subdivisions of  $C(k,1,1,1,1,1)$, then the chromatic number of $D$ is at most $7\cdot 24^2\cdot k$.
 \end{theorem}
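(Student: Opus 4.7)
The plan is to assemble the main theorem by combining the three chromatic bounds on $D_i^1$, $D_i^2$, and $D_i^3$ from Propositions \ref{hello}, \ref{prop2}, and \ref{p3}, via Lemma \ref{far}, and then summing across the $k$ vertex classes $V_1, \dots, V_k$ using disjoint palettes.

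First, since $D$ is strongly connected, a BFS-style exploration from an arbitrary vertex $r$ produces a spanning out-tree of $D$ rooted at $r$, and Proposition \ref{finaltree} upgrades this to a final spanning out-tree $T$ of $D$. With $T$ in hand, define the level-based vertex partition $V_i := \bigcup_{\alpha \geq 0} L_{i+\alpha k}(T)$ for $i \in \{1,\dots,k\}$; since $T$ is final, $D[L_j(T)]$ is empty for every level $j$, so within each $D_i := D[V_i]$ the endpoints of every arc lie at levels differing by at least $k$. In particular, no arc of $D_i$ has its endpoints in the same level set, so the three sets $A_1, A_2, A_3$ genuinely partition $A(D_i)$: an arc $(x,y)\in A(D_i)$ is forward or backward in $T$ (never level-preserving), and depending on whether $x,y$ are ancestor-comparable, it lands in $A_1$, $A_2$, or $A_3$.

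Next, Propositions \ref{hello}, \ref{prop2}, and \ref{p3} apply to each $D_i^j$ and give $\chi(D_i^1)\leq 24$, $\chi(D_i^2)\leq 7$, and $\chi(D_i^3)\leq 24$. Since $D_i = D_i^1 \cup D_i^2 \cup D_i^3$, two applications of Lemma \ref{far} yield
\[
\chi(D_i) \;\leq\; \chi(D_i^1)\cdot\chi(D_i^2)\cdot\chi(D_i^3) \;\leq\; 24\cdot 7\cdot 24 \;=\; 7\cdot 24^{2}.
\]

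Finally, I color each $D_i$ with its own palette of $7\cdot 24^{2}$ colors, using pairwise disjoint palettes for different indices $i$. Because $V_1,\dots,V_k$ is a vertex partition of $D$, every arc of $D$ either lies entirely in some $D_i$ (properly colored by construction) or has its endpoints in distinct parts $V_i$ and $V_{i'}$, in which case the endpoints automatically receive colors from disjoint palettes. Hence $\chi(D)\leq k\cdot 7\cdot 24^{2}$, as claimed. The main obstacle was really already the content of Propositions \ref{hello}, \ref{prop2}, \ref{p3}; what is left here is purely the clean assembly, with the only subtlety being the verification that the level-mod-$k$ partition makes every arc of $D_i$ fall into exactly one of $A_1$, $A_2$, $A_3$, which is handled by the finality of $T$.
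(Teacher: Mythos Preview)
Your proof is correct and follows essentially the same approach as the paper's: obtain a final spanning out-tree via Proposition \ref{finaltree}, bound each $\chi(D_i)$ by $7\cdot 24^2$ using Propositions \ref{hello}, \ref{prop2}, \ref{p3} together with Lemma \ref{far}, and then use disjoint palettes across the $k$ classes $V_1,\dots,V_k$. Your write-up is slightly more explicit than the paper's in justifying why $A_1,A_2,A_3$ genuinely partition $A(D_i)$ and why the disjoint-palette coloring is proper on arcs crossing between classes, but the underlying argument is identical.
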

\begin{proof}
Since $D$ is strongly connected digraph, then there is a  spanning out-tree $T$ of $D$.  By Proposition \ref{finaltree}, we may assume that $T$ is final. Define $D^i_j$ as before for $i \in \lbrace 1,2,\dots ,k \rbrace$ and $j  \in \lbrace 1,2,3 \rbrace$. Due to Lemma \ref{far} together with Proposition \ref{hello}, Proposition \ref{prop2} and Proposition \ref{p3},  we get that $\chi(D_i)\leq 7\cdot 24^2$ for all $i \in  \lbrace 1,2,\dots ,k \rbrace$.  As $V(D)=\bigcup _{i=1}^{k} V(D_i)$,  by assigning $7\cdot 24^2$ distinct colors to each $D_i$, we obtain a proper coloring of $D$ with $7\cdot 24^2\cdot k$ colors. $\hfill {\square}$
\end{proof}
\begin{theorem}
Let $D$ be a strongly connected digraph having no subdivisions of  $C(1,1,1,1,1,1)$, then the chromatic number of $D$ is at most $7\cdot 16^2\cdot k$.
\end{theorem}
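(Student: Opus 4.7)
The plan is to mirror the proof of the main theorem in the special case $k=1$ and exploit the triviality that, when every block of the target cycle has length one, any antidirected cycle of length exactly six already constitutes a copy of $C(1,1,1,1,1,1)$. Since $k=1$, the partition $V_1,\dots,V_k$ collapses to the single set $V_1=V(D)$, so the arc-partition $A_1\cup A_2\cup A_3$ is applied directly to $D$ itself, producing subdigraphs $D_1^1,D_1^2,D_1^3$ exactly as defined at the start of Section 3. Proposition \ref{prop2} does not depend on $k$ and therefore yields $\chi(D_1^2)\leq 7$ for free, so the entire task reduces to sharpening the bounds on $\chi(D_1^1)$ and $\chi(D_1^3)$ from $24$ down to $16$.

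The key observation I would use is that any antidirected cycle of length exactly $6$ is, by definition of antidirected, a copy of $C(1,1,1,1,1,1)$, and hence already a (trivial) subdivision of it. Thus if $D_1^1\subseteq D$ (respectively $D_1^3\subseteq D$) contains an antidirected cycle of length $6$, the hypothesis is violated immediately. For antidirected cycles of length at least $8$, the proofs of Proposition \ref{hello} and Lemmas \ref{lem1}-\ref{lem3},\ref{lem12} apply essentially verbatim: every place those arguments need a tree segment $T[u,v]$ of length at least $k$ specializes to $l\geq 1$, which holds for any non-trivial segment appearing in their constructions.

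Once the two cases above are combined, neither $D_1^1$ nor $D_1^3$ can contain any antidirected cycle of length at least $6$. Applying Theorem \ref{antidi} with parameter $3$, if $\chi(D_1^j)\geq 17=8\cdot 3-7$ for some $j\in\{1,3\}$, then $D_1^j$ contains an antidirected cycle of length at least $2\cdot 3 = 6$, which is the desired contradiction. Hence $\chi(D_1^1)\leq 16$ and $\chi(D_1^3)\leq 16$, and Lemma \ref{far} gives
\[
\chi(D)\;\leq\;\chi(D_1^1)\cdot\chi(D_1^2)\cdot\chi(D_1^3)\;\leq\;16\cdot 7\cdot 16\;=\;7\cdot 16^2.
\]

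The main obstacle is really a bookkeeping one. One has to go through Proposition \ref{hello}, Lemmas \ref{lem1}-\ref{lem3}, and Lemma \ref{lem12} and verify, case by case, that each appearance of a tree path used to build the ``long'' block of the $S$-$C(k,1,1,1,1,1)$ genuinely collapses to the trivial length-$\geq 1$ requirement when $k=1$, and that the extra structural subcases which were previously absorbed by the length-$\geq 8$ hypothesis are now covered by the direct length-$6$ argument above. I expect this verification to be routine, since the only $k$-dependent quantity throughout Section 3 is the required length of that one long block.
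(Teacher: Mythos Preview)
Your proposal is correct and follows essentially the same approach as the paper's own proof: both exploit the observation that an antidirected cycle of length exactly $6$ is already a copy of $C(1,1,1,1,1,1)$, invoke Theorem \ref{antidi} with threshold $8\cdot 3-7=17$ to reduce $\chi(D_i^1)$ and $\chi(D_i^3)$ from $24$ to $16$, and combine with Proposition \ref{prop2} via Lemma \ref{far}. The paper is slightly terser in asserting that the length-$\geq 8$ case is handled by the existing proofs of Propositions \ref{hello} and \ref{p3}, whereas you spell out the (correct) reason this specialization goes through, but the argument is the same.
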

\begin{proof}
Let $T$ be a  spanning out-tree of $D$.  Proposition \ref{finaltree} implies that  we can assume that $T$ is final. Define $D^i_j$ as before for $i \in \lbrace 1,2,\dots ,k \rbrace$ and $j \in \lbrace 1,2,3 \rbrace$. Let $i \in \lbrace 1,2,\dots ,k \rbrace$ and let $t \in \{1,3\}$. If $\chi(D_i^t)\geq 17$, then Theorem \ref{antidi} implies that $D_{i}^{t}$ contains an antidirected cycle $C$ of length at least $6$. If $C$ is of length $6$, then $C$ is a $S$-$C(1,1,1,1,1,1)$, a contradiction. Then the length of $C$ is at least $8$, and so the proof of Proposition \ref{hello} and Proposition \ref{p3} implies that $D$ contains a subdivisions of  $C(1,1,1,1,1,1)$, a contradiction. Hence $\chi(D_i^t)\leq 16$ for $t=1,3$. Due to Lemma \ref{far} together with Proposition \ref{prop2},  we get that $\chi(D_i)\leq 7\cdot 16^2$ for all $i \in \lbrace 1,2,\dots ,k \rbrace$.  As $V(D)=\bigcup _{i=1}^{k} V(D_i)$,  by assigning $7\cdot 16^2$ distinct colors to each $D_i$, we obtain a proper coloring of $D$ with $7\cdot 16^2\cdot k$ colors. $\hfill {\square}$
\end{proof}
\section*{Statements and Declarations}
\textbf{Conflict of interest} The authors have not disclosed any competing interests.\vspace{2mm}\\
\textbf{Data sharing} Data sharing not applicable to this article as no datasets were generated or analysed during
 the current study.\vspace{2mm}\\
\textbf{Funds} The authors declare that no funds, grants, or other support were received during the preparation of this manuscript.

 	\end{document}